\documentclass[12pt,reqno]{amsart}
\usepackage{amsmath,amsthm,amsfonts,amssymb,amscd,  multicol, verbatim, enumerate, stackrel}
\usepackage{verbatim}
\usepackage{fullpage}
\usepackage{xcolor}
\usepackage[english]{babel}
\usepackage{cite}
\usepackage{amsfonts}
\usepackage{latexsym}
\usepackage{amsthm}
\usepackage{amsopn}
\usepackage{amsmath}
\usepackage[all]{xy}
\usepackage{subcaption}
\usepackage{hyperref}

\usepackage{tikz}
\usetikzlibrary{arrows}

\makeatletter
\@namedef{subjclassname@2020}{%
  \textup{2020} Mathematics Subject Classification}
\makeatother

\usepackage{verbatim}
\usepackage{amssymb}
\usepackage{mathrsfs}
\usepackage{float}
\usepackage{fullpage}
\usepackage{amscd}
\usepackage{amscd}
\usepackage{color}

\usepackage{graphicx}

\theoremstyle{plain}
\newtheorem*{thm*}{Theorem}
\newtheorem*{con*}{Conjecture}
\newtheorem{thm}{Theorem}[section]
\newtheorem{pro}[thm]{Proposition}

\newtheorem{lem}[thm]{Lemma}

\newtheorem{cor}[thm]{Corollary}

\theoremstyle{definition}
\newtheorem{defn}[thm]{Definition}

\theoremstyle{remark}
\newtheorem{rmk}[thm]{Remark}

\title{ Grapes and Alexander duality}
\author{Mario Marietti}

\date{}

\address{Mario Marietti, Dipartimento  di Ingegneria Industriale e Scienze Matematiche, Universit\`a Politecnica delle Marche, via Brecce Bianche, 60131 Ancona,  Italy}
\email{m.marietti@univpm.it}

\subjclass[2020]
{57Q05 - 13F55}
\keywords{Simplicial complexes, grapes, Alexander duality}

\begin{document}

\maketitle

\begin{abstract}
In this paper, we prove that the property of being a grape (in any of its variants) is invariant under Alexander duality. The explicitly determined (simple-)homotopy type of a grape can be transferred to its Alexander dual via Combinatorial Alexander Duality in (co)homology. We also provide several applications.
\end{abstract}
\section{Introduction}

Recursive techniques play a central role in topological and algebraic combinatorics. Many important classes of simplicial complexes, such as shellable, vertex decomposable, or nonevasive complexes, are defined by requiring that suitable recursive decompositions preserve good combinatorial properties. These notions have proved extremely powerful, but they are sometimes too restrictive to apply to a wide range of naturally occurring examples.

Topological and combinatorial grapes were introduced in \cite{MT1} and \cite{MT2} as a flexible framework to study the homotopy classes of simplicial complexes through controlled vertex-based decompositions. Roughly speaking, a grape is a simplicial complex that can be recursively reduced by choosing vertices whose deletion and link satisfy suitable structural or topological conditions. Thanks to its recursive predictability, the grape structure provides a systematic combinatorial certificate to prove that a complex is contractible or homotopy equivalent to a wedge of spheres of possibly different dimensions.  
This recursive structure has proved particularly effective in the study of complexes arising from combinatorial structures with natural distinguished objects that induce vertices in the associated simplicial complexes, such as forests with their leaves (see \cite{MT2}).

In this paper, we give four purely combinatorial variations on the theme of grape, each of which is a topological grape 
and hence contractible or homotopy equivalent to a wedge of spheres (Definition~\ref{grappoli}). 
Depending on the variant, the condition is:
\begin{itemize}
\item the link and deletion must themselves be grapes, and
\item there is some extra condition that controls how the topology behaves when the pieces are glued back.
\end{itemize}
We establish a compatibility between all these recursive structures of grapes and Alexander duality (Theorem~\ref{grappolo-chiuso}). Specifically, we show that if a simplicial complex is a grape (in any of its four variants), then its Alexander dual is also a grape of the same type. This is not true for many standard notions. The symmetry within the theory simplifies the topological analysis of dual structures.

Furthermore, we show that a simplicial complex that happens to fall under the strongest version of grape is either simple-homotopy equivalent to the void complex or to the boundary of a cross-polytope of a some dimension $n$  (Theorem~\ref{strongcombgrapesim}). Our method also provides a recursive procedure to compute the dimension $n$. This, together with the Combinatorial Alexander Duality, can be used to transfer simple-homotopy information between a simplicial complex and its dual (Corollary~\ref{AlexDual}). Recall that simple-homotopy equivalence is finer than homotopy equivalence.

We also provide several applications of the above results, showing that a number of simplicial complexes in the literature fall naturally under the theory discussed above (see Section~\ref{appli}).

\section{Preliminaries}
In this section we recall the basic definitions concerning simplicial complexes and fix the notation to be used throughout the paper.

\begin{defn}
A \emph{simplicial complex} is a pair $(\Delta, X)$, where $X$ is a finite set and $\Delta$ is a family of subsets of $X$ such that, whenever $\sigma \in \Delta$ and $\sigma' \subseteq \sigma$, we have $\sigma' \in \Delta$.
\end{defn}

For brevity, we sometimes say that $\Delta$ is the simplicial complex, omitting the dependence on $X$, which is called the \emph{ground set} of the simplicial complex. The elements of $\Delta$ are the \emph{faces} of the simplicial complex, and the maximal faces are called \emph{facets}. The element of a face of cardinality $1$ is called a  \emph{vertex}. We do not require that $\{x\} \in \Delta $ for all $x \in X$, that is, not every element of $X$ is necessarily a vertex of the simplicial complex.

Note that, for each ground set $X$, there are two simplicial complexes with no vertices:  the \emph{irrelevant complex}, which we call also the \emph{$(-1)$-dimensional sphere} or the \emph{boundary of the $0$-dimensional cross-polytope} and has only one face, the empty face,  and the \emph{void complex}, which we call also the \emph{$(-1)$-dimensional simplex} and has no faces.  Further examples of simplicial complexes on a nonempty ground set $X$ are the $(|X|-1)$-dimensional simplex $2^X$, that is, the set of all subsets of $X$, its boundary $2^X\setminus \{X\}$, and the boundary of the $n$-dimensional cross-polytope, denoted $\partial \beta_n$. 

Every simplicial complex $(\Delta, X) $, other than the irrelevant complex, admits  a standard geometric realization.  Let $W$ be the real vector space having $X$ as a basis.  The realization of $\Delta $ is the union of the convex hulls of its faces.  Whenever we refer to a topological 
property of $\Delta $, we implicitly mean a property of this geometric realization, endowed with the topology induced by the Euclidean topology of $W$. By convention, the void complex  is regarded as contractible.

Let $x \in X$. The \emph{deletion} and  the \emph{link}  of $x$ are the simplicial complexes on the ground set $X\setminus \{x\}$ defined, respectively, as follows:
\begin{eqnarray*} 
\operatorname{dl}_{x}(\Delta) & = & \bigl\{ \sigma \in \Delta :  x \notin  \sigma \bigr\}, \\
\operatorname{lk}_{x}(\Delta) & = & \bigl\{ \sigma \in \Delta : x\notin \sigma \text{ and }  \sigma \cup \{x\} \in \Delta \bigr\}.
\end{eqnarray*}
If $\Delta _1 , \ldots , \Delta_k$ are simplicial complexes with ground set $X$, we define 
$$
\Delta_1 * \cdots * \Delta_k =
\bigl\{ m_1 \cup \cdots \cup m_k : m_i \in \Delta_i \bigr\},
$$
and we call it the \emph{join} of   $\Delta _1 , \ldots , \Delta_k$.
If $x,y \in X$, we set 
\begin{eqnarray*} 
A_x \bigl( \Delta \bigr) & = & \Delta * \{\emptyset , x\}, \\
\Sigma_{x,y} \bigl( \Delta \bigr) & = & \Delta * \{\emptyset , x,y\}. 
\end{eqnarray*}
Both $A_x \bigl( \Delta \bigr)$ and $\Sigma_{x,y} \bigl( \Delta \bigr)$ 
are simplicial complexes.
If $x \neq y$  and no face of $\Delta $ contains either of them, then 
$A_x \bigl( \Delta \bigr)$ and $\Sigma_{x,y} \bigl( \Delta \bigr)$ are called, respectively, 
the \emph{cone on $\Delta$ with apex $x$} and the \emph{suspension of $\Delta$}. We denote them by $\operatorname{cone}_x \bigl( \Delta \bigr)$ and $\operatorname{susp}_{x,y} \bigl( \Delta \bigr)$.
If  $x \neq y$, $x' \neq y'$, and none of $x,y,x',y'$ is contained in any face of 
$\Delta $, then the suspensions 
$\operatorname{susp}_{x,y} \bigl( \Delta \bigr)$ and $\operatorname{susp}_{x',y'} \bigl( \Delta \bigr)$ are 
isomorphic. Hence, in this case, we sometimes drop the subscripts from the notation.
It is well-known that 
if $\Delta $ is contractible, then $\operatorname{susp} (\Delta )$ is contractible, and that 
if $\Delta $ is homotopy equivalent to a $k$-dimensional sphere, then $\operatorname{susp} (\Delta )$ is homotopy equivalent  to a $(k+1)$-dimensional sphere.  

For ease of reference, we state explicitly the following straightforward fact.
\begin{pro}
\label{precontra}
For every simplicial complex $(\Delta, X)$ and every $x\in X$, we have 
\begin{eqnarray*} 
 \operatorname{cone}_x (\operatorname{lk}_{x}(\Delta)) \cup  \operatorname{dl}_{x}(\Delta) &=& \Delta, \\
\operatorname{cone}_x (\operatorname{lk}_{x}(\Delta)) \cap  \operatorname{dl}_{x}(\Delta) &=& \operatorname{lk}_{x}(\Delta).
\end{eqnarray*}
\end{pro}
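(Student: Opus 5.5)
Both identities are equalities of families of subsets of $X$, so I will prove each by double inclusion, working directly from the definitions of deletion, link and cone. I first unwind $\operatorname{cone}_x(\operatorname{lk}_{x}(\Delta))$. Since $\operatorname{lk}_{x}(\Delta)$ lives on the ground set $X\setminus\{x\}$, no face of it contains $x$, so $A_x$ really is a cone. Its faces are exactly the sets $\tau$ and $\tau\cup\{x\}$ with $\tau \in \operatorname{lk}_{x}(\Delta)$. Each such $\tau$ satisfies $x\notin\tau$ and $\tau\cup\{x\}\in\Delta$.

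For the first identity, the inclusion $\supseteq$ is a case split on a face $\sigma\in\Delta$. If $x\notin\sigma$, then $\sigma\in\operatorname{dl}_{x}(\Delta)$. If $x\in\sigma$, set $\tau=\sigma\setminus\{x\}$. Then $x\notin\tau$ and $\tau\cup\{x\}=\sigma\in\Delta$, so $\tau\in\operatorname{lk}_{x}(\Delta)$ and $\sigma=\tau\cup\{x\}$ lies in the cone. For $\subseteq$, deletion faces lie in $\Delta$ by definition. A cone face is either $\tau$ or $\tau\cup\{x\}$; the set $\tau\cup\{x\}$ is in $\Delta$ by the link condition, and $\tau$ is in $\Delta$ since $\Delta$ is closed under subsets.

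For the second identity, any $\tau\in\operatorname{lk}_{x}(\Delta)$ is a face of the cone, and it lies in the deletion because $\tau\in\Delta$ and $x\notin\tau$. Conversely, an element of the intersection is a cone face not containing $x$, because every deletion face avoids $x$. Hence it has the form $\tau$ with $\tau\in\operatorname{lk}_{x}(\Delta)$.

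The only point needing care is the degenerate case, where $x$ is not a vertex or the complex is void or irrelevant. For example, if $\{x\}\notin\Delta$ then $\operatorname{lk}_{x}(\Delta)$ is the void complex, so its cone is void and both identities reduce to trivialities. The set-theoretic argument above covers these cases verbatim, since it never assumes that any face exists. I therefore expect no real obstacle beyond this bookkeeping.
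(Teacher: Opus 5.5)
Your proposal is correct: the elementwise double-inclusion argument, using that the cone's faces are exactly the sets $\tau$ and $\tau\cup\{x\}$ with $\tau$ in the link, establishes both identities, and it handles the void and irrelevant cases as well. The paper gives no proof, calling the statement straightforward, and your argument is exactly the routine verification it leaves to the reader.
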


\section{Grapes}
Topological and combinatorial grapes were first explored in \cite{MT1} (although they were not referred to by this name at the time) and were later formally introduced in \cite{MT2}. Here, we recall their definitions, together with some variations on the theme.
Recall that  a subcomplex  $\Delta '$ of $\Delta $ is  \emph{contractible in $\Delta$} if the inclusion map $\Delta' \hookrightarrow \Delta$ is homotopic to a constant map.

\begin{defn} \label{grappoli}
A simplicial complex $(\Delta,X)$ is a
\begin{itemize}
\item \emph{topological grape} if 
\begin{enumerate}
\item there exists $a \in X$ such that $\operatorname{lk}_{a}(\Delta)$ is contractible in $\operatorname{dl}_{a}(\Delta)$ and 
both $\operatorname{dl}_{a}(\Delta)$ and  $\operatorname{lk}_{a}(\Delta)$ are topological grapes, or 
\item $\Delta $ is contractible or $\Delta = \{ \emptyset \}$;
\end{enumerate}

\item \emph{combinatorial grape} if 
\begin{enumerate}
\item there exists $a \in X$ such that $\operatorname{lk}_{a}(\Delta)$ is contained in a cone that is itself contained in $\operatorname{dl}_{a}(\Delta)$, 
and both $\operatorname{dl}_{a}(\Delta)$ and  $\operatorname{lk}_{a}(\Delta)$ are combinatorial grapes, or 
\item $\Delta $ has at most one vertex;
\end{enumerate}

\item \emph{strong combinatorial grape} if 
\begin{enumerate}
\item there exists $a \in X$ such that at least one of $\operatorname{lk}_{a}(\Delta)$ and $\operatorname{dl}_{a}(\Delta)$ is a cone, 
and both $\operatorname{dl}_{a}(\Delta)$ and  $\operatorname{lk}_{a}(\Delta)$ are strong combinatorial grapes, or 
\item $\Delta $ has at most one vertex;
\end{enumerate}

\item \emph{weak combinatorial grape} if 
\begin{enumerate}
\item there exists $a \in X$ and a simplicial complex $\Gamma$ such that
\[
\operatorname{lk}_{a}(\Delta) \subset \Gamma \subset \operatorname{dl}_{a}(\Delta),
\]
$\Gamma$ is simple-homotopy equivalent to the void complex, and both
$\operatorname{dl}_{a}(\Delta)$ and $\operatorname{lk}_{a}(\Delta)$ are weak combinatorial grapes, or 
\item $\Delta $ has at most one vertex;
\end{enumerate}

\item \emph{strong-weak combinatorial grape} if 
\begin{enumerate}
\item there exists $a \in X$ such that at least one of $\operatorname{lk}_{a}(\Delta)$ and $\operatorname{dl}_{a}(\Delta)$ is simple-homotopy equivalent to the void complex,  
and both $\operatorname{dl}_{a}(\Delta)$ and  $\operatorname{lk}_{a}(\Delta)$ are strong-weak combinatorial grapes, or 
\item $\Delta $ has at most one vertex.
\end{enumerate}
\end{itemize}
\end{defn}

\begin{rmk}
The following hierarchies are immediate:
\begin{enumerate}
\item weak combinatorial grape $\implies$ topological grape;
\item strong combinatorial grape $\implies$ combinatorial grape $\implies$ weak combinatorial grape;
\item strong combinatorial grape $\implies$ strong-weak combinatorial grape $\implies$ weak combinatorial grape.
\end{enumerate}
\end{rmk}

\begin{rmk}
Instances of strong combinatorial grapes already appear in \cite{MT1} and \cite{MT2}. However, the terminology \lq\lq strong\rq\rq is borrowed from \cite{GKL}, where these complexes are simply called strong grapes.
\end{rmk}

\begin{rmk}
The simplicial complex whose facets are $\{a,b\}$, $\{b,c\}$, $\{c,d\}$, $\{d,e\}$, and $\{e,a\}$ (a \lq\lq $5$-cycle\rq\rq) is a weak combinatorial grape but not a combinatorial grape. This is the only simplicial complex with fewer than six vertices that is not a combinatorial grape.
\end{rmk}

\begin{rmk}
Disjoint unions of topological (respectively, combinatorial or weak combinatorial) grapes are again topological (respectively, combinatorial or weak combinatorial) grapes, but this is property no longer holds for strong combinatorial grapes and strong-weak combinatorial grapes. 
\end{rmk}

\begin{rmk}
Several properties of simplicial complexes—such as non-evasiveness, vertex decomposability, shellability, and pure shellability—bear a formal resemblance to the notion of a grape. Nevertheless, a grape does not in general satisfy any of these properties.
\end{rmk}

Arguing as in  \cite[Proposition 3.3]{MT2}, we have the following generalization of it.  
\begin{pro}
\label{ognicomp}
Let $(\Delta, X)$ be a simplicial complex and let $a \in X$. If $\operatorname{lk}_{a}(\Delta)$ is contractible in  $\operatorname{dl}_{a}(\Delta)$, then $\Delta$ is homotopy equivalent to  $\operatorname{dl}_{a}(\Delta) \vee \Sigma \operatorname{lk}_{a}(\Delta)$.  Therefore:
\begin{itemize}
\item each connected component of a topological grape (and hence also of a weak combinatorial grape or a combinatorial grape) is either contractible or homotopy equivalent to a wedge of spheres;
\item a strong-weak combinatorial grape (and hence also a strong combinatorial grape) is either contractible or homotopy equivalent to a sphere.
\end{itemize}
\end{pro}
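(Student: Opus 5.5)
The main homotopy equivalence comes from the decomposition in Proposition~\ref{precontra}. The two bullet points then follow by induction on $|X|$ along the recursive definition of grape.

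\textbf{Main claim.} By Proposition~\ref{precontra}, $\Delta$ is the union of $\operatorname{dl}_a(\Delta)$ and $\operatorname{cone}_a(\operatorname{lk}_a(\Delta))$, and the two pieces intersect in $\operatorname{lk}_a(\Delta)$. Both are subcomplexes, so the inclusion $\operatorname{lk}_a(\Delta)\hookrightarrow \operatorname{dl}_a(\Delta)$ is a cofibration. Hence $\Delta$ is the homotopy pushout (mapping cone) of this inclusion: it is $\operatorname{dl}_a(\Delta)$ with a cone attached along $\operatorname{lk}_a(\Delta)$.

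The homotopy type of a mapping cone depends only on the homotopy class of the attaching map. By hypothesis the inclusion is homotopic to a constant map $c$. The mapping cone of $c\colon L\to D$ is $D\vee \Sigma L$, where $\Sigma$ is the unreduced suspension. This uses that cell complexes are well pointed, and the wedge is taken at the image point of $c$.

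Some degenerate cases need separate checks.
\begin{itemize}
\item If $\operatorname{lk}_a(\Delta)$ is void, then $a$ is not a vertex and $\Delta=\operatorname{dl}_a(\Delta)$. This matches the formula if $\Sigma$ of the void complex is read as a point (the cone over nothing, glued along nothing, contributes only the apex wedged trivially).
\item If $\operatorname{lk}_a(\Delta)=\{\emptyset\}$, then $\Delta$ is $\operatorname{dl}_a(\Delta)$ plus an isolated point, and $\Sigma\{\emptyset\}=S^0$. The wedge $D\vee S^0$ is again $D$ with a disjoint point.
\end{itemize}
I would state these conventions briefly.

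\textbf{First bullet.} Induct on $|X|$. In case (2) of the definition, $\Delta$ is contractible or equal to $\{\emptyset\}$ (i.e.\ $S^{-1}$), and both are acceptable. In case (1), $D=\operatorname{dl}_a(\Delta)$ and $L=\operatorname{lk}_a(\Delta)$ are grapes on smaller ground sets, so every component of each is contractible or a wedge of spheres. We have $\Delta\simeq D\vee\Sigma L$.
\begin{itemize}
\item $\Sigma L$ is connected unless $L$ is $\{\emptyset\}$ or void. If $L$ has components $L_1,\dots,L_k$, each contractible or a wedge of spheres, then $\Sigma L$ is homotopy equivalent to $\bigvee_i \Sigma L_i \vee \bigvee^{k-1} S^1$. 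Each $\Sigma L_i$ is contractible or a wedge of spheres, since suspension commutes with wedges up to homotopy. Hence $\Sigma L$ is contractible or a wedge of spheres.
\item Wedging $\Sigma L$ onto one component of $D$ changes only that component, and that component remains contractible or a wedge of spheres.
\end{itemize}
The weak and combinatorial cases reduce to the topological one via the Remark: a subcomplex of a cone (or of a simple-homotopy-trivial complex) is contractible in it.

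\textbf{Second bullet.} For strong-weak grapes, induct similarly, but on the statement ``contractible or homotopy equivalent to a single sphere'', with the void complex counted as contractible and $\{\emptyset\}$ as $S^{-1}$. In case (2), a complex with at most one vertex is void, $\{\emptyset\}$, or a point.

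In case (1), one of $D$ or $L$ is simple-homotopy equivalent to the void complex, hence contractible.
\begin{itemize}
\item If $D$ is contractible, then $L$ is contractible in $D$ and $\Delta\simeq \Sigma L$. By induction $L$ is contractible or a sphere, so $\Delta$ is contractible or a sphere.
\item If $L$ is contractible, then the inclusion is null-homotopic and $\Delta\simeq D\vee\Sigma L\simeq D$, since $\Sigma L$ is contractible. By induction $D$ is contractible or a sphere.
\end{itemize}
There is one subtlety: the induction hypothesis must also exclude disconnected outputs. This holds because a single sphere $S^0$ is the only disconnected option, and it is allowed.

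\textbf{Main obstacle.} The real work is the mapping-cone/homotopy-pushout argument for the first claim, together with careful handling of the void and irrelevant edge cases. The rest is routine induction.
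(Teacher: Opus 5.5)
Your proposal is correct and is essentially the argument the paper defers to ([MT2, Proposition~3.3]). By Proposition~\ref{precontra}, $\Delta$ is the mapping cone of the inclusion $\operatorname{lk}_a(\Delta)\hookrightarrow\operatorname{dl}_a(\Delta)$; replacing this null-homotopic attaching map by a constant one gives $\operatorname{dl}_a(\Delta)\vee\Sigma\operatorname{lk}_a(\Delta)$, and both bullet points then follow by induction along the recursive definitions, with your handling of the void/irrelevant links and of $\Sigma L$ for disconnected $L$ supplying details the paper leaves implicit.
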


Being a grape depends only on the vertex set and not on the ground set.  
\begin{lem}
\label{togli uno}
Let $(\Delta,X)$ be a simplicial complex and let $x$ be an element of $X$ that is not a vertex of $\Delta$.
Then $(\Delta,X)$ is a grape of any type if and only if $(\Delta,X\setminus{x})$ is a grape of the same type.
\end{lem}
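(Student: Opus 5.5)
Since being a grape is defined recursively, I will prove the statement by induction on $|X|$, treating all five types at once. Fix $x\in X$ that is not a vertex of $\Delta$ and write $\Delta'=(\Delta,X\setminus\{x\})$. First observe that the base conditions do not depend on the ground set. The conditions ``$\Delta$ has at most one vertex'', ``$\Delta$ is contractible'' and ``$\Delta=\{\emptyset\}$'' are unchanged, because the geometric realization of $\Delta$ only uses the basis vectors indexed by actual vertices. Likewise, being a cone, being simple-homotopy equivalent to the void complex, and being contractible in another complex are properties of the realizations. So both the base clauses and the extra gluing conditions are insensitive to adding or removing non-vertex elements of the ground set.

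For the inductive step, consider the recursive clause (1) for some $a$. There are two cases.

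\emph{Case $a\neq x$.} The element $x$ is not a vertex of $\operatorname{dl}_a(\Delta)$ or of $\operatorname{lk}_a(\Delta)$, since both are subcomplexes of $\Delta$. Their ground sets are $X\setminus\{a\}$ for $\Delta$ and $X\setminus\{a,x\}$ for $\Delta'$, and as families of sets they coincide. By the induction hypothesis applied on the ground set $X\setminus\{a\}$, deletion and link are grapes of the given type with respect to one ground set if and only if they are with respect to the other. The extra condition is also the same in both settings: a cone $\Gamma$ with $\operatorname{lk}_a\subseteq\Gamma\subseteq\operatorname{dl}_a$ lies inside $\operatorname{dl}_a$, so its vertices, including its apex, are vertices of $\operatorname{dl}_a$. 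Hence it is a cone whichever ground set is used, and the same holds for a complex $\Gamma$ simple-homotopy equivalent to the void complex. This case therefore transfers in both directions.

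\emph{Case $a=x$.} This can only occur for $(\Delta,X)$, and it is the main point to check. Here $\operatorname{dl}_x(\Delta)=\Delta$ on the ground set $X\setminus\{x\}$, which is exactly $\Delta'$, while $\operatorname{lk}_x(\Delta)$ is the void complex, since no face contains $x$. So if $(\Delta,X)$ is a grape via $a=x$, then $\operatorname{dl}_x(\Delta)=\Delta'$ is a grape of the same type by definition, and this gives the implication directly.

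The remaining bookkeeping is to make the correspondence between recursive witnesses in $X$ and in $X\setminus\{x\}$ precise. Since each step reduces the ground set by one element, the induction is well founded. I expect the only real subtlety to be confirming that the gluing conditions are topological and hence ground-set independent, which follows from the remark about realizations in the first paragraph.
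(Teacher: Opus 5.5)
Your proof is correct. In the forward direction it is exactly the paper's argument: a witness $a\neq x$ transfers via the induction hypothesis applied to $\operatorname{dl}_a$ and $\operatorname{lk}_a$ on the ground set $X\setminus\{a\}$, and a witness $a=x$ gives $\operatorname{dl}_x(\Delta,X)=(\Delta,X\setminus\{x\})$ outright.

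You diverge in the converse. You lift the certificate of $(\Delta,X\setminus\{x\})$ to $X$ by the same induction; that certificate is either a base clause or a witness, and the witness is necessarily some $a\neq x$. The paper instead uses $x$ itself as the witness for $(\Delta,X)$. There $\operatorname{dl}_x(\Delta,X)=(\Delta,X\setminus\{x\})$ is a grape by hypothesis, and $\operatorname{lk}_x(\Delta,X)$ is the void complex, which is a grape of every type and makes every gluing condition hold trivially. The paper's converse is therefore a one-liner needing no induction. Yours is symmetric and reuses the case $a\neq x$, at the price of appealing once more to the ground-set independence of the base and gluing conditions.

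Your explicit check of the base clauses is also a small gain. It covers a topological grape with several vertices that is certified by clause (2) rather than by a witness $a$, a case the paper's proof passes over.

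One minor imprecision: the apex of a cone $\Gamma\subseteq\operatorname{dl}_a(\Delta)$ is a vertex of $\operatorname{dl}_a(\Delta)$ only when $\Gamma$ is nonvoid. If $\Gamma$ is void, its apex may be $x$. In that case any element of $X\setminus\{a,x\}$ can serve as apex instead, and that set is nonempty once $\Delta$ has two vertices.
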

\begin{proof}
We use induction on $|X|$, the case $|X|\leq 1$ being trivial.
Since $(\Delta,X)$ and $(\Delta,X\setminus{x})$ have the same number of vertices, we may suppose that they have more than one vertex.

Let us fix a type of grape. For brevity, in the rest of this proof, by a grape we mean a grape of this fixed type.

Suppose that $(\Delta,X)$ is a grape and let us show that also $(\Delta,X \setminus{x})$ is a grape.
There exists $a \in X$ such that both $\operatorname{lk}{a}(\Delta,X)$ and $\operatorname{dl}{a}(\Delta,X)$ are grapes and, moreover, they satisfy a further condition that, although different according to the type of grape, does not depend on the ground set but only on the set of faces of the simplicial complexes involved.

If $a= x$, then
$\operatorname{dl}_{a}(\Delta,X)=(\Delta,X\setminus{x})$, and hence $(\Delta,X\setminus{x})$ is a grape.

If $a\neq x$, then $\operatorname{lk}{a}(\Delta,X)$ and $\operatorname{lk}{a}(\Delta,X\setminus{x})$ have the same faces but different ground sets: the former has ground set $X\setminus{a}$ and the latter has ground set $X\setminus{x,a}$. Analogously for $\operatorname{dl}{a}(\Delta,X)$ and $\operatorname{dl}{a}(\Delta,X\setminus{x})$. By the induction hypothesis, both $\operatorname{lk}{a}(\Delta,X\setminus{x})$ and $\operatorname{dl}{a}(\Delta,X\setminus{x})$ are grapes. Moreover, they satisfy the further condition since $\operatorname{lk}{a}(\Delta,X)$ and $\operatorname{dl}{a}(\Delta,X)$ do. Thus $(\Delta,X\setminus{x})$ is a grape.

Conversely, suppose that $(\Delta,X\setminus{x})$ is a grape and let us show that also $(\Delta,X)$ is a grape.
Since $\operatorname{lk}{x}(\Delta,X)$ is the void complex and
$\operatorname{dl}{x}(\Delta,X)=(\Delta,X\setminus{x})$, we have that both $\operatorname{lk}{x}(\Delta,X)$ and
$\operatorname{dl}{x}(\Delta,X)$ are grapes. Furthermore, since $\operatorname{lk}_{x}(\Delta,X)$ is the void complex, the extra condition is satisfied. Hence, $(\Delta,X)$ is a grape.
\end{proof}

We end this section with the following easy lemma, which is needed later.

\begin{lem}
\label{cone}
\leavevmode
\begin{enumerate}
\item
\label{cone1} A cone is a strong combinatorial grape.
\item 
\label{cone2} The simplicial complex $ 2^X \setminus \{X\}$ on the ground set $X$ is a strong combinatorial grape.
\end{enumerate}
\end{lem}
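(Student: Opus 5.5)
Both parts go by induction on the size of the ground set, peeling off one vertex at a time via clause (1) of the definition of strong combinatorial grape. Lemma~\ref{togli uno} lets us discard ground-set elements that are not vertices, so we may assume every element of $X$ is a vertex whenever convenient.

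For part~\eqref{cone1}, let $\Delta=\operatorname{cone}_x(\Delta')$ on ground set $X$, with $x$ belonging to no face of $\Delta'$. The induction is on $|X|$. If $\Delta$ has at most one vertex we are done by clause (2). Otherwise choose any vertex $a\neq x$ of $\Delta$. Since $x$ lies in no face of $\Delta'$, we have
\[
\operatorname{dl}_a(\Delta)=\operatorname{cone}_x(\operatorname{dl}_a(\Delta')), \qquad \operatorname{lk}_a(\Delta)=\operatorname{cone}_x(\operatorname{lk}_a(\Delta')).
\]
Both are cones with apex $x$ on the ground set $X\setminus\{a\}$. The extra condition that one of them be a cone therefore holds. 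By the induction hypothesis both are strong combinatorial grapes, so $\Delta$ is one too. The only point needing care is the degenerate case: if $\operatorname{lk}_a(\Delta')$ is void, then $\operatorname{lk}_a(\Delta)$ is void rather than a cone. This is harmless, because the deletion is still a cone, and the void complex has no vertices and is a grape by clause (2).

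For part~\eqref{cone2}, let $\Delta=2^X\setminus\{X\}$ and induct on $|X|$. If $|X|\le 1$, then $\Delta$ has at most one vertex and clause (2) applies. Otherwise pick $a\in X$. Then $\operatorname{dl}_a(\Delta)=2^{X\setminus\{a\}}$, the full simplex on $X\setminus\{a\}$. Since $|X\setminus\{a\}|\ge 1$, this is a cone with apex any of its elements, and so a strong combinatorial grape by part~\eqref{cone1}; it also supplies the extra condition. Next, $\operatorname{lk}_a(\Delta)=2^{X\setminus\{a\}}\setminus\{X\setminus\{a\}\}$, which is the same kind of complex on a smaller ground set, so it is a strong combinatorial grape by induction. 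Hence $\Delta$ is a strong combinatorial grape.

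There is no serious obstacle here. The step most prone to a slip is the bookkeeping of degenerate cases: void links, and the irrelevant complex arising when $|X|=1$ in part~\eqref{cone2}. Both are covered by clause (2) because they have no vertices.
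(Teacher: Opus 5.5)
Your proof is correct and follows the paper's argument essentially verbatim. In part (1) you induct on $|X|$ and remove some $a\neq x$, so that both $\operatorname{lk}_a(\Delta)$ and $\operatorname{dl}_a(\Delta)$ are cones with apex $x$. In part (2) you use that $\operatorname{dl}_a(\Delta)=2^{X\setminus\{a\}}$ is a cone while $\operatorname{lk}_a(\Delta)$ is the same kind of complex on a smaller ground set.

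The void-link case you guard against is vacuous. Because you choose $a\neq x$ to be a vertex of $\Delta$, you have $\{a\}\in\Delta'$ and hence $\emptyset\in\operatorname{lk}_a(\Delta')$. In any case, under the paper's join convention the void complex is itself $\operatorname{cone}_x$ of the void complex.
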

\begin{proof}
(\ref{cone1}). Let $\Delta $ be a simplicial complex with ground set $X$ that is cone with apex $x$. We use induction on $|X|$, the case $|X|\leq 1$ being trivial.
Let $a\in X\setminus\{x\}$.  Then $\operatorname{lk}_{a}(\Delta,X)$ and  $\operatorname{dl}_{a}(\Delta,X)$ are both cones with apex $x$, and they are strong combinatorial grapes by the induction hypothesis.

(\ref{cone2}). We use induction on $|X|$, the case $|X|\leq 1$ being trivial. Let $\Delta = 2^X \setminus \{X\}$ and let $a$ be any element in $X$.   Then $\operatorname{lk}_{a}(\Delta,X) = 2^{X\setminus \{a\}} \setminus (X\setminus \{a\})$, which is a strong combinatorial grape by the induction hypothesis,  and  $\operatorname{dl}_{a}(\Delta,X)= 2^{X\setminus \{a\}}$, which is a strong combinatorial grape and a cone.
\end{proof}

\section{Simple-homotopy}

For the reader's convenience, we recall the basic notions of simple-homotopy theory, which gives
a combinatorial framework that refines classical homotopy theory for simplicial complexes (see \cite{C}).  
Let $\sigma$ be a maximal face of a simplicial complex 
$\Delta $ and $\tau \subset \sigma $ with  $|\tau | = | \sigma | -1$. 
If $\sigma $ is the only face of $\Delta $ properly containing $\tau $, then the removal 
of $\sigma $ and $\tau $ is called an \emph{elementary collapse}. The inverse of this procedure is an \emph{elementary expansion}. 
Two simplicial complexes are \emph{simple-homotopy equivalent}  if they are related by a sequence of  elementary collapses and  elementary  expansions. 
This is a finer equivalence relation than homotopy equivalence, i.e., if two simplicial complexes are simple-homotopy equivalent, then they are homotopy equivalent.

We say that \emph{$\Delta $ collapses to $\Delta '$} 
if there is a sequence of elementary collapses leading from $\Delta $ to $\Delta '$. 
A simplicial complex is  \emph{collapsible} if  it collapses to the void complex (or, equivalently, to a simplicial complex with only one vertex as it is customary to say mirroring the definition of contractible).  For example, every cone is collapsible.
Again, it is easy to check that collapsibility implies contractibility, but the converse is not true. The smallest
example of a contractible simplicial complex that is not collapsible is the dunce hat (see \cite{Z}). 
For ease of reference, we state explicitly the following straightforward fact.
\begin{lem}
\label{secollassaanchesospensione}
Let $K$ and $K'$ be simplicial complexes.  If $K$ and $K'$ are simple-homotopy equivalent, then  $\operatorname{susp}(K)$ and  $\operatorname{susp}(K')$ are simple-homotopy equivalent.
\end{lem}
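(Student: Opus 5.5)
It suffices to treat a single elementary collapse. Simple-homotopy equivalence is generated by elementary collapses and expansions, and an expansion is just the inverse of a collapse. So if we show that an elementary collapse $K \searrow K_1$ induces an elementary-move relation between $\operatorname{susp}(K)$ and $\operatorname{susp}(K_1)$, then chaining these along the sequence relating $K$ and $K'$ gives the statement. We fix the suspension points $x,y$ once and for all. By the isomorphism remark in the preliminaries, we may assume they lie outside every face of every complex in the chain, enlarging the ground sets if necessary; Lemma~\ref{togli uno}-style ground-set bookkeeping is irrelevant here, since only faces matter.

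Suppose the elementary collapse removes a maximal face $\sigma$ of $K$ together with its free face $\tau$, where $|\tau|=|\sigma|-1$. The faces of $\operatorname{susp}(K)$ are $\rho$, $\rho\cup\{x\}$ and $\rho\cup\{y\}$ for $\rho\in K$. Those of $\operatorname{susp}(K_1)$ are the same with $\rho\in K\setminus\{\sigma,\tau\}$. The difference consists of six faces: $\sigma,\tau,\sigma\cup x,\tau\cup x,\sigma\cup y,\tau\cup y$. We remove them by two elementary collapses followed by one more.
\begin{itemize}
\item First, $\sigma\cup\{x\}$ is maximal in $\operatorname{susp}(K)$, since $\sigma$ is maximal in $K$ and $\{x,y\}$ is not a face. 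The only face properly containing $\tau\cup\{x\}$ is $\sigma\cup\{x\}$, because $\sigma$ is the only face of $K$ properly containing $\tau$. So removing the pair $(\sigma\cup x,\tau\cup x)$ is an elementary collapse.
\item The same argument applies to the pair $(\sigma\cup y,\tau\cup y)$.
\item Finally, $\sigma$ is now maximal, and $\tau$ is contained only in $\sigma$. Indeed, $\tau\cup x$ and $\tau\cup y$ are gone, and no other face contains $\tau$. So $(\sigma,\tau)$ is an elementary collapse.
\end{itemize}
The result is exactly $\operatorname{susp}(K_1)$.

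The only delicate point is the degenerate case $\tau=\emptyset$, that is, $\sigma$ is a single vertex $\{v\}$. Here $K$ collapses to the void complex, or $K=\{\emptyset,\{v\}\}$. The same three steps still work formally: $\tau\cup x=\{x\}$ is contained only in $\{v,x\}$, and so on. Checking that this matches the paper's convention that collapsing a point to the void complex counts as an elementary collapse is the main thing to verify. The suspension of the void complex is the void complex, since the join with the void complex is void. This matches the outcome of the three collapses, because after them no face remains.

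Then we conclude by induction on the length of the sequence of moves.
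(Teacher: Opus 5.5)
Your argument is correct. The paper gives no proof of this lemma, stating it as a straightforward fact, and lifting each elementary collapse $(\sigma,\tau)$ of $K$ to the three collapses $(\sigma\cup\{x\},\tau\cup\{x\})$, $(\sigma\cup\{y\},\tau\cup\{y\})$, $(\sigma,\tau)$ of $\operatorname{susp}(K)$, and reversing this for expansions, is exactly the routine verification that phrase implies.

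The point you flag in the degenerate case needs no further checking. The paper's definition of an elementary collapse only requires $|\tau|=|\sigma|-1$, so $\tau=\emptyset$ is allowed, and this forces $K=\{\emptyset,\{v\}\}$. The paper also explicitly defines collapsibility as collapsing to the void complex.
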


The following result is needed in the proof of  Theorem~\ref{strongcombgrapesim}.
\begin{pro}
\label{simple-hom}
Let $(\Delta, X)$ be a simplicial complex and $a \in X$.
\begin{enumerate}
\item
\label{simple-hom1}
 If $\operatorname{dl}_{a}(\Delta)$ is a cone, then $\Delta$ is simple-homotopy equivalent to   $\operatorname{susp}(\operatorname{lk}_{a}(\Delta))$.
\item
\label{simple-hom2}
 If $\operatorname{lk}_{a}(\Delta)$  is collapsible, then $\Delta$ collapses to  $\operatorname{dl}_{a}(\Delta)$.
\end{enumerate}
\end{pro}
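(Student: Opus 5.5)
Both parts rest on Proposition~\ref{precontra}: $\Delta = \operatorname{dl}_a(\Delta) \cup \operatorname{cone}_a(\operatorname{lk}_a(\Delta))$, and the two pieces meet exactly in $\operatorname{lk}_a(\Delta)$. Throughout write $L=\operatorname{lk}_a(\Delta)$ and $D=\operatorname{dl}_a(\Delta)$. We may assume $a$ is a vertex of $\Delta$. Otherwise $L$ is the void complex and $\Delta=D$. Then (\ref{simple-hom2}) is trivial, and in (\ref{simple-hom1}) we get $\Delta=D$, a cone, while $\operatorname{susp}$ of the void complex is again void, hence collapsible; so both sides are simple-homotopy equivalent to the void complex.

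\textbf{Part (\ref{simple-hom2}).} Fix a sequence of elementary collapses $L=L_0\searrow L_1\searrow\cdots\searrow L_m=\emptyset$ (void). We lift each step to $\Delta$. Suppose the $i$-th step removes the pair $\tau\subset\sigma$ from $L_{i-1}$, where $\sigma$ is the unique face of $L_{i-1}$ properly containing $\tau$. Then in the complex $\Delta_{i-1}=D\cup\operatorname{cone}_a(L_{i-1})$ we remove the pair $\tau\cup\{a\}\subset\sigma\cup\{a\}$.

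We must check this is a legal elementary collapse. Any face of $\Delta_{i-1}$ containing $\tau\cup\{a\}$ contains $a$, so it has the form $\rho\cup\{a\}$ with $\rho\in L_{i-1}$ and $\rho\supseteq\tau$. Hence $\rho\in\{\tau,\sigma\}$. This shows $\sigma\cup\{a\}$ is maximal and is the only face properly containing $\tau\cup\{a\}$.

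After $m$ steps all faces containing $a$ have been removed, including $\{a\}$ itself, which arises from the last step $\emptyset\subset\{v\}$. What remains is exactly $D$. So $\Delta\searrow D$. This part is routine; the only care needed is the bookkeeping that the faces of $D$ never interfere, which holds because they do not contain $a$.

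\textbf{Part (\ref{simple-hom1}).} Let $D=\operatorname{cone}_x(D')$ with apex $x\ne a$ (note $x\neq a$ since $a\notin D$). The plan is to expand $\Delta$ to $\Delta\cup\operatorname{cone}_b(D)$, with $b$ a new element, then collapse it to $\operatorname{cone}_b(L)\cup\operatorname{cone}_a(L)=\operatorname{susp}_{a,b}(L)$.

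\emph{Expansion step.} The complex $\operatorname{cone}_b(D)$ is a cone glued to $\Delta$ along $D$, and $D$ is collapsible since it is a cone. Reversing the lifted collapse of $D$ from part (\ref{simple-hom2}), with $b$ in the role of $a$, gives a collapse $\Delta\cup\operatorname{cone}_b(D)\searrow\Delta$. Thus $\Delta$ is simple-homotopy equivalent to $\Delta\cup\operatorname{cone}_b(D)$.

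\emph{Collapse step.} Now write $\Delta\cup\operatorname{cone}_b(D)=\operatorname{cone}_b(D)\cup\operatorname{cone}_a(L)$, where the two cones intersect in $L$. The obstacle is to collapse $\operatorname{cone}_b(D)$ down to $\operatorname{cone}_b(L)$ while keeping $\operatorname{cone}_a(L)$ fixed, since collapses inside a subcomplex need not be legal in the whole complex.

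To organize this, view the big complex as a complex with a chosen vertex $x$. Its link at $x$ is $\operatorname{cone}_b(\operatorname{lk}_x D)\cup\operatorname{cone}_a(\operatorname{lk}_x L)$, and its deletion is $\operatorname{cone}_b(D')\cup\operatorname{cone}_a(\operatorname{dl}_x L)$. This suggests an induction on the number of vertices, but it gets messy.

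The cleaner route is to use simple-homotopy invariance of gluing. Since $\operatorname{cone}_b(L)\subseteq\operatorname{cone}_b(D)$, both complexes are collapsible, and they share the subcomplex $L$, I would invoke the standard fact that replacing one piece of a union by a simple-homotopy equivalent piece relative to the common subcomplex preserves simple-homotopy type (Cohen \cite{C}). This requires $\operatorname{cone}_b(L)\hookrightarrow\operatorname{cone}_b(D)$ to be a simple-homotopy equivalence relative to $L$. That holds because both are contractible and the Whitehead torsion of the inclusion vanishes: via the collapse $\operatorname{cone}_b(D)\searrow b$ one can expand and collapse relative to $\operatorname{cone}_b(L)$.

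Making this last relative step precise is the main difficulty. If it is not accepted directly from \cite{C}, I would prove it by induction on the faces of $D\setminus L$, removing the pairs $\{\sigma,\sigma\cup\{b\}\}$ in order of decreasing $|\sigma|$. For each such $\sigma\in D\setminus L$, the face $\sigma\cup\{b\}$ is maximal among the remaining faces, and $\sigma$ lies in no face containing $a$, because $\sigma\notin L$. Hence each removal is an elementary collapse. This yields $\operatorname{cone}_b(D)\cup\operatorname{cone}_a(L)\searrow\operatorname{cone}_b(L)\cup\operatorname{cone}_a(L)=\operatorname{susp}(L)$, which concludes the argument.
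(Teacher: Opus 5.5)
Your proof is correct. Part~(2) is the paper's argument: you lift each elementary collapse of $\operatorname{lk}_a(\Delta)$ by adjoining $a$ and use Proposition~\ref{precontra}, and you check more carefully than the paper does that the lifted pairs are free.

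For part~(1) you take a genuinely different route. The paper first disposes of the cases where $\Delta$ is itself a cone (with apex $a$ or with another apex) and then cites \cite[Theorem~3.5]{MT1}. You instead give a self-contained expansion--collapse argument. For a new element $b$, the complex $\Delta^+=\Delta\cup\operatorname{cone}_b(\operatorname{dl}_a(\Delta))$ satisfies $\operatorname{lk}_b(\Delta^+)=\operatorname{dl}_a(\Delta)$ and $\operatorname{dl}_b(\Delta^+)=\Delta$. Since $\operatorname{dl}_a(\Delta)$ is collapsible, part~(2) gives $\Delta^+\searrow\Delta$. Then removing the pairs $\sigma,\sigma\cup\{b\}$ for $\sigma\in\operatorname{dl}_a(\Delta)\setminus\operatorname{lk}_a(\Delta)$, in order of decreasing size, gives $\Delta^+\searrow\operatorname{susp}_{a,b}(\operatorname{lk}_a(\Delta))$.

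These removals are indeed elementary collapses, but in the write-up you should add one sentence. Maximality of $\sigma\cup\{b\}$, together with the fact that no face containing $a$ contains $\sigma$, does not by itself show that $\sigma\cup\{b\}$ is the only face properly containing $\sigma$. Any other such face would be some $\tau$ or $\tau\cup\{b\}$ with $\tau\supsetneq\sigma$ and $\tau\in\operatorname{dl}_a(\Delta)\setminus\operatorname{lk}_a(\Delta)$. Such a face has already been removed by the size ordering, and you should say so.

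Your approach avoids the case split and the external reference. It also exhibits the equivalence explicitly as one expansion sequence followed by one collapse sequence. It even proves slightly more: the collapse $\Delta^+\searrow\operatorname{susp}_{a,b}(\operatorname{lk}_a(\Delta))$ holds for every $\Delta$, so the hypothesis is needed only to know that $\operatorname{dl}_a(\Delta)$ is collapsible, not that it is a cone.

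The paragraph on Whitehead torsion and gluing, and the aborted induction on the apex $x$, are unnecessary and should be deleted. Your final elementary collapse already is the ``relative'' step you describe as the main difficulty.
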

\begin{proof}
(\ref{simple-hom1}). If $\Delta$ is a cone with apex $a$, then $\operatorname{dl}_{a}(\Delta)=\operatorname{lk}_{a}(\Delta)$ and the suspension of a cone is a cone. If $\Delta$ is a cone with apex $v$ for some $v$ different from $a$, then both  $\operatorname{dl}_{a}(\Delta)$ and $\operatorname{lk}_{a}(\Delta)$ are cones with apex $v$. In particular,  $\operatorname{lk}_{a}(\Delta)$ is collapsible, as well as  its suspension by Lemma~\ref{secollassaanchesospensione}. Hence, we may suppose that  $\Delta$ is not a cone, and in this case  the assertion is given by \cite[Theorem~3.5]{MT1}

(\ref{simple-hom2}). Consider a sequence of elementary collapses from $\operatorname{lk}_{a}(\Delta)$ to the void complex: $(F_1, F_1\setminus\{x_1\}),  (F_2, F_2\setminus\{x_2\}), \ldots, (F_s, F_s\setminus\{x_s\})$.  Thus, the sequence  $(F_1\cup \{a\}, (F_1\cup \{a\})\setminus\{x_1\} ),  (F_2\cup \{a\}, (F_2\cup \{a\})\setminus\{x_2\}), \ldots, (F_s\cup \{a\}, (F_s\cup \{a\})\setminus\{x_s\})$ shows that   $Cone_a(\operatorname{lk}_{a}(\Delta))$ collapses to  $\operatorname{lk}_{a}(\Delta)$.  By Proposition~\ref{precontra}, the same sequence shows also that $\Delta$ collapses to  $\operatorname{dl}_{a}(\Delta)$.
\end{proof}

The following result and its proof provide a tool to study the simple-homotopy type of strong combinatorial grapes.
\begin{thm}
\label{strongcombgrapesim}
A strong combinatorial grape $\Delta$ is simple-homotopy equivalent to either the void complex  or  the boundary of a cross-polytope.
\end{thm}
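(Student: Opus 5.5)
We argue by induction on $|X|$, the size of the ground set; by Lemma~\ref{togli uno} we may equivalently induct on the number of vertices. The base case is a complex with at most one vertex. Such a complex is the void complex, the irrelevant complex $\{\emptyset\}=\partial\beta_0$, or a single point. A single point is a cone, hence collapsible, hence simple-homotopy equivalent to the void complex. So the base case holds.

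For the inductive step, let $\Delta$ have at least two vertices. Choose $a\in X$ as in the definition, so that $\operatorname{lk}_a(\Delta)$ and $\operatorname{dl}_a(\Delta)$ are strong combinatorial grapes on the smaller ground set $X\setminus\{a\}$ and at least one of them is a cone. We split into two cases.

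\emph{Case 1: $\operatorname{lk}_a(\Delta)$ is a cone.} A cone is collapsible, so Proposition~\ref{simple-hom}(\ref{simple-hom2}) shows that $\Delta$ collapses to $\operatorname{dl}_a(\Delta)$. By induction, $\operatorname{dl}_a(\Delta)$ is simple-homotopy equivalent to the void complex or to some $\partial\beta_n$, and hence so is $\Delta$.

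\emph{Case 2: $\operatorname{dl}_a(\Delta)$ is a cone.} Proposition~\ref{simple-hom}(\ref{simple-hom1}) shows that $\Delta$ is simple-homotopy equivalent to $\operatorname{susp}(\operatorname{lk}_a(\Delta))$, taking the suspension with two new apices. By induction, $\operatorname{lk}_a(\Delta)$ is simple-homotopy equivalent to the void complex or to $\partial\beta_n$ for some $n$. Lemma~\ref{secollassaanchesospensione} transfers this equivalence to the suspensions. Two identifications then finish the case.
\begin{itemize}
\item The suspension of $\partial\beta_n$ is exactly $\partial\beta_{n+1}$, since $\partial\beta_{n+1}=\partial\beta_n * \{\emptyset,x,y\}$ with $x,y$ new. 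This includes $n=0$: the suspension of $\{\emptyset\}$ is two points, which is $\partial\beta_1$.
\item The suspension of the void complex is $\emptyset*\{\emptyset,x,y\}$, which is again void because the join with the void complex has no faces.
\end{itemize}

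The main obstacle is the bookkeeping at degenerate cases. The void complex is simple-homotopy equivalent only to other collapsible complexes, and the point is collapsible, which justifies both the base case and the cone cases. One must also check that Lemma~\ref{secollassaanchesospensione} applies to the void and irrelevant complexes, or handle those directly as above. Finally, the suspension in Proposition~\ref{simple-hom}(\ref{simple-hom1}) must be taken with apices that are genuinely new; this can be arranged by enlarging the ground set, which is harmless by Lemma~\ref{togli uno}. The recursion also gives the dimension: $n$ increases by one exactly at each step where the deletion is used as the cone, and $n$ is set by the case reached at the bottom of the recursion.
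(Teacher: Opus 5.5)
Your proof is correct and is essentially the paper's argument: the same case split on whether $\operatorname{lk}_a(\Delta)$ or $\operatorname{dl}_a(\Delta)$ is the cone, handled by Proposition~\ref{simple-hom} and Lemma~\ref{secollassaanchesospensione}, with the same identifications of the suspensions of the void complex and of $\partial\beta_n$. Your induction on $|X|$ is slightly tidier than the paper's induction on the number of vertices, which does not decrease if the chosen $a$ is not a vertex; separately, one aside in your proposal is false, though unused: a complex simple-homotopy equivalent to the void complex need not be collapsible (the dunce hat is a counterexample), and you only need the true converse, that collapsible complexes are simple-homotopy equivalent to the void complex.
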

\begin{proof}
We use induction on the number of vertices of $\Delta$. The result is trivial if  $\Delta $ has at most one vertex. If  $\Delta $ has more than one vertex, then by the definition of a strong combinatorial grape there exists $a \in X$ such that at least one of $\operatorname{lk}_{a}(\Delta)$ and $\operatorname{dl}_{a}(\Delta)$ is a cone, 
and both $\operatorname{dl}_{a}(\Delta)$ and  $\operatorname{lk}_{a}(\Delta)$ are strong combinatorial grapes.

Suppose $\operatorname{dl}_{a}(\Delta)$ is a cone. Then $\Delta$ is simple-homotopy equivalent to   $\operatorname{susp}(\operatorname{lk}_{a}(\Delta))$ by Proposition~\ref{simple-hom}. Being a strong combinatorial grape, $\operatorname{lk}_{a}(\Delta)$  is simple-homotopy equivalent to either the void complex  or the boundary of a cross-polytope, by the induction hypothesis. The assertion follows by  Lemma~\ref{secollassaanchesospensione} since the suspension of the void complex is the void complex and the suspension of the boundary of the $n$-dimensional cross-polytope is  the boundary of the $(n+1)$-dimensional cross-polytope.

Suppose $\operatorname{lk}_{a}(\Delta)$ is a cone. Then $\Delta$ collapses  to  $\operatorname{dl}_{a}(\Delta)$ by Proposition~\ref{simple-hom}. Being a strong combinatorial grape, $\operatorname{dl}_{a}(\Delta)$  is simple-homotopy equivalent to either the void complex  or the boundary of a cross-polytope, by the induction hypothesis. 
\end{proof}
\begin{rmk}
The proof of Theorem~\ref{strongcombgrapesim} gives a procedure to detect when a strong combinatorial grape  is simple-homotopy equivalent to the void complex and when to  the boundary of a cross-polytope, and, in the latter case, the dimension of that cross-polytope.
\end{rmk}

\section{Grapes and Alexander duals}
In this section, we show that the property of being a grape, in any of the variants introduced in the previous section, is invariant under Alexander duality.

Let $(\Delta,X)$ be a simplicial complex. The \emph{Alexander dual} of $(\Delta,X)$ is the simplicial complex $(\Delta^*,X)$  defined by
$$\Delta^* = \{F : X \setminus F \notin \Delta\}.$$

Note that $\Delta$ and $\Delta^*$ have the same ground set and that the set of faces $\Delta^*$ of the Alexander dual of $(\Delta,X)$ depends not only on $\Delta$ but also on $X$.

For ease of reference, in the following proposition we collect some straightforward facts that are needed in the proof of Theorem~\ref{grappolo-chiuso}. 
\begin{pro}
\label{duale_cono}
Let $(\Delta,X)$ be a simplicial complex.
\begin{enumerate}
\item
\label{dcone1} If $(\Gamma,X)$ is a subcomplex of $(\Delta,X)$, then $(\Delta^*,X)$ is a subcomplex of $(\Gamma^*,X)$.
\item 
\label{dcone2} If $(\Delta,X)$ is a cone with apex $x$ then also $(\Delta^*, X)$ is a cone with apex $x$.
\item 
\label{dcone3} If $(\Delta,X)$ is simple-homotopy equivalent to the void complex,  then also $(\Delta^*, X)$ is simple-homotopy equivalent to the void complex.
\item
\label{dl-lk-lk-dl}
 Let $a\in X$. Then
$(\operatorname{dl}_{a}(\Delta,X))^*= \operatorname{lk}_{a}(\Delta^*,X)$
and
  $(\operatorname{lk}_{a}(\Delta,X))^*= \operatorname{dl}_{a}(\Delta^*,X)$.
\end{enumerate}
\end{pro}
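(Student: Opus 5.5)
We verify the four items separately. Items (\ref{dcone1}), (\ref{dcone2}) and (\ref{dl-lk-lk-dl}) are direct manipulations of the definition $\Delta^*=\{F : X\setminus F\notin\Delta\}$. Item (\ref{dcone3}) is the only one needing an idea: we translate elementary collapses of $\Delta$ into elementary expansions of $\Delta^*$.

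For (\ref{dcone1}), if $F\in\Delta^*$ then $X\setminus F\notin\Delta\supseteq\Gamma$, so $X\setminus F\notin\Gamma$ and $F\in\Gamma^*$. For (\ref{dcone2}), let $F\in\Delta^*$ with $x\notin F$. We must show $F\cup\{x\}\in\Delta^*$, i.e.\ $(X\setminus F)\setminus\{x\}\notin\Delta$. Since $x\in X\setminus F$, if $(X\setminus F)\setminus\{x\}$ were in $\Delta$, then the cone property would put $X\setminus F$ in $\Delta$, a contradiction. Degenerate cases, such as $\Delta=2^X$ with $\Delta^*$ void, must be checked against the paper's convention for cones. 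For (\ref{dl-lk-lk-dl}), work on the ground set $X\setminus\{a\}$ and take $F\subseteq X\setminus\{a\}$. Then $F\in(\operatorname{dl}_a\Delta)^*$ iff $(X\setminus\{a\})\setminus F\notin\Delta$, iff $X\setminus(F\cup\{a\})\notin\Delta$, iff $F\cup\{a\}\in\Delta^*$, iff $F\in\operatorname{lk}_a(\Delta^*)$. The second identity is analogous: $F\in(\operatorname{lk}_a\Delta)^*$ iff $(X\setminus F)\notin\Delta$, where $X\setminus F$ is $(X\setminus\{a\}\setminus F)\cup\{a\}$, iff $F\in\Delta^*$ with $a\notin F$.

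For (\ref{dcone3}), the key observation is the following. Let $(\sigma,\tau)$ be an elementary collapse of $\Delta$, with $\tau=\sigma\setminus\{v\}$, producing $\Delta'=\Delta\setminus\{\sigma,\tau\}$. Then $(\Delta')^*=\Delta^*\cup\{X\setminus\sigma,\,X\setminus\tau\}$, where $X\setminus\tau=(X\setminus\sigma)\cup\{v\}$. We claim this is an elementary expansion. Suppose $G\supsetneq X\setminus\sigma$ and $G\in(\Delta')^*$. Then $X\setminus G\subsetneq\sigma$ and $X\setminus G\notin\Delta'$. The only proper subset of $\sigma$ missing from $\Delta'$ is $\tau$, so $G=X\setminus\tau$. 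The same reasoning shows $X\setminus\tau$ is maximal. Symmetrically, expansions of $\Delta$ become collapses of $\Delta^*$. Hence a chain of collapses and expansions from $\Delta$ to the void complex on $X$ becomes a chain from $\Delta^*$ to $(\text{void})^*=2^X$. The latter is a simplex, hence collapsible (the case $X=\emptyset$ is trivial).

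The main obstacle is that a simple-homotopy chain may pass through complexes whose vertices lie outside $X$, so Alexander duality on the fixed ground set $X$ is not directly available. I would first try to enlarge the ground set to a finite $Y\supseteq X$ containing every vertex used in the chain, and then compare $\Delta^{*_Y}$ with $\Delta^{*_X}$. A computation as in (\ref{dl-lk-lk-dl}) gives, for $Y=X\cup\{y\}$,
\[
\Delta^{*_Y}=2^X\cup\operatorname{cone}_y(\Delta^{*_X}).
\]
This is a simplex with a cone glued along $\Delta^{*_X}$, so it is simple-homotopy equivalent to $\operatorname{susp}(\Delta^{*_X})$. This yields contractibility of the suspension only, which is not enough, so extra care is needed here. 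As a fallback I would argue homotopically. A complex simple-homotopy equivalent to the void complex is contractible, and since the Whitehead group of the trivial group vanishes, any contractible finite complex is simple-homotopy equivalent to a point. It therefore suffices to show that $\Delta^*$ is contractible. I would attempt this by reducing to collapse sequences supported on $X$, or by an inductive argument on $|X|$ using (\ref{dl-lk-lk-dl}) together with the decomposition of Proposition~\ref{precontra}.
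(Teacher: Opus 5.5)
Your checks of (\ref{dcone1}), (\ref{dcone2}) and (\ref{dl-lk-lk-dl}) are correct and are the direct verifications the paper has in mind; the paper gives no proof, calling all four items straightforward. The degenerate case in (\ref{dcone2}) is harmless: with the paper's join definition, the void complex equals $\operatorname{cone}_x$ of the void complex. Your translation of an elementary collapse of $\Delta$ (removing $\sigma\supsetneq\tau$) into the elementary expansion of $\Delta^*$ (adding $X\setminus\sigma$ and $X\setminus\tau$) is also correct. It already proves (\ref{dcone3}) whenever $\Delta$ is joined to the void complex by a chain of elementary moves through complexes on the fixed ground set $X$ (in particular whenever $\Delta$ is collapsible), provided $X\neq\emptyset$. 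The case $X=\emptyset$ is not trivial but false: there the dual of the void complex is $2^{\emptyset}=\{\emptyset\}$, the irrelevant complex.

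The remaining gap is the one you name, and it cannot be closed. Under the usual notion, in which expansions may introduce new vertices, item (\ref{dcone3}) is false, so both your enlarged-ground-set route and your contractibility fallback must fail. Here is a counterexample.
Let $\Gamma$ be an acyclic complex of dimension $d$ with $\pi_1(\Gamma)\neq 1$, on a ground set $X$ with $|X|\geq d+5$; for instance, a triangulated Poincar\'e homology sphere with one facet removed, $d=3$.
\begin{itemize}
\item Every $F$ with $|F|\leq |X|-d-2$ lies in $\Gamma^*$. So $\Gamma^*$ contains the full $2$-skeleton of the simplex on $X$ and is simply connected.
\item By Combinatorial Alexander Duality, $\Gamma^*$ is acyclic.
\item Hence $\Gamma^*$ is contractible and, since $\mathrm{Wh}(1)=0$, simple-homotopy equivalent to the void complex.
\item Yet $(\Gamma^*)^*=\Gamma$ is not even contractible.
\end{itemize}
By your own move-duality, every chain from $\Gamma^*$ to the void complex must therefore leave $2^X$.

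Your formula $\Delta^{*_Y}=2^X\cup\operatorname{cone}_y(\Delta^{*_X})$ pinpoints the failure. New vertices only make suspensions of $\Delta^*$ trivial, and $\operatorname{susp}(\Gamma)$ is contractible while $\Gamma$ is not. Thus (\ref{dcone3}) is valid only in the restricted fixed-ground-set sense, where your argument is already complete. Consequently, the weak and strong-weak cases of Theorem~\ref{grappolo-chiuso} are justified only under that reading.
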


Recall that both $\operatorname{dl}_{a}(\Delta,X)$ and $\operatorname{lk}_{a}(\Delta,X)$ have  $X\setminus\{a\}$ as ground set.

\begin{thm}
\label{grappolo-chiuso}
The properties of being a combinatorial grape, a strong combinatorial grape, a weak combinatorial grape, or a strong-weak combinatorial grape are invariant under Alexander duality.
\end{thm}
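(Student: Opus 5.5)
We argue by induction on $|X|$, treating the four types of grapes in parallel. Fix one type and suppose $(\Delta,X)$ is a grape of that type. Since $(\Delta^*)^*=\Delta$, it suffices to show that $\Delta^*$ is a grape of the same type; the converse then follows by symmetry.

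\emph{Base case.} Here $\Delta$ has at most one vertex, and $\Delta^*$ may have many vertices, so this case is not immediate. By Lemma~\ref{togli uno} we may pass to a ground set as convenient, but the dual depends on $X$, so we work directly.
\begin{itemize}
\item If $\Delta$ is void, then $\Delta^*=2^X$, a cone (or void/irrelevant when $|X|\le 1$), hence a strong combinatorial grape by Lemma~\ref{cone}.
\item If $\Delta=\{\emptyset\}$, then $\Delta^*=2^X\setminus\{X\}$, a strong combinatorial grape by Lemma~\ref{cone}.
\item If $\Delta=\{\emptyset,\{v\}\}$, then $\Delta^*$ consists of all $F$ with $X\setminus F\notin\{\emptyset,\{v\}\}$. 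Hence $\Delta^*$ is $2^X$ minus the sets $X$ and $X\setminus\{v\}$, which is a cone with apex $v$ over $2^{X\setminus\{v\}}\setminus\{X\setminus\{v\}\}$. It is again a strong combinatorial grape by Lemma~\ref{cone}.
\end{itemize}
Since strong combinatorial grapes are grapes of every type by the hierarchy remarks, the base case holds for all types.

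\emph{Inductive step.} Let $a\in X$ be the vertex witnessing that $\Delta$ is a grape. By Proposition~\ref{duale_cono}(\ref{dl-lk-lk-dl}), taking duals on the ground set $X\setminus\{a\}$,
\[
\operatorname{lk}_a(\Delta^*)=(\operatorname{dl}_a\Delta)^*,\qquad \operatorname{dl}_a(\Delta^*)=(\operatorname{lk}_a\Delta)^*.
\]
Both are grapes of the given type by the induction hypothesis applied on $X\setminus\{a\}$. It remains to transfer the extra condition, with the roles of link and deletion swapped, using the same vertex $a$.
\begin{itemize}
\item \emph{Strong combinatorial.} One of $\operatorname{lk}_a\Delta$, $\operatorname{dl}_a\Delta$ is a cone. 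Its dual is a cone by Proposition~\ref{duale_cono}(\ref{dcone2}), and that dual is $\operatorname{dl}_a\Delta^*$ or $\operatorname{lk}_a\Delta^*$ respectively, so the "at least one is a cone" condition is symmetric under the swap.
\item \emph{Strong-weak.} The same argument works using Proposition~\ref{duale_cono}(\ref{dcone3}) in place of (\ref{dcone2}).
\item \emph{Combinatorial and weak.} We have $\operatorname{lk}_a\Delta\subset\Gamma\subset\operatorname{dl}_a\Delta$ on ground set $X\setminus\{a\}$. By Proposition~\ref{duale_cono}(\ref{dcone1}), inclusions reverse:
\[
(\operatorname{dl}_a\Delta)^*\subset\Gamma^*\subset(\operatorname{lk}_a\Delta)^*,\quad\text{that is,}\quad \operatorname{lk}_a\Delta^*\subset\Gamma^*\subset\operatorname{dl}_a\Delta^*.
\]
Here $\Gamma^*$ is a cone if $\Gamma$ is (by (\ref{dcone2})), or is simple-homotopy equivalent to the void complex if $\Gamma$ is (by (\ref{dcone3})).
\end{itemize}

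The main obstacle is bookkeeping rather than substance. Every dual must be taken with respect to the correct ground set, namely $X\setminus\{a\}$ for the link, the deletion and $\Gamma$. The base case must also be handled carefully, because $\Delta^*$ need not have at most one vertex.
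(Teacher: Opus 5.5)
Your proof is correct and follows the paper's argument essentially step for step. It uses the same induction on $|X|$ and the same direct treatment of the void, irrelevant and one-vertex complexes via Lemma~\ref{cone}. It then uses the witnessing vertex $a$ together with Proposition~\ref{duale_cono} to swap link and deletion and carry the extra condition across, exactly as the paper does. This gives reversed inclusions for the intermediate complex, sends cones to cones, and sends simple-homotopy triviality to simple-homotopy triviality.

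One caution applies equally to the paper's own proof: the weak and strong-weak cases rest entirely on Proposition~\ref{duale_cono}(\ref{dcone3}). That fact is sound if all collapses and expansions are kept inside the fixed ground set. In that setting, a collapse of $\tau\subset\sigma$ in $\Delta$ is exactly an expansion by $X\setminus\sigma\subset X\setminus\tau$ in $\Delta^*$. It fails for unrestricted simple-homotopy equivalence, where being equivalent to the void complex just means being contractible, and contractibility is not preserved by Alexander duality. For example, take an acyclic $2$-complex $K$ with $\pi_1(K)\neq 1$ on a ground set with $|X|\ge 7$. General position and Alexander duality make $K^*$ contractible, while $(K^*)^*=K$ is not.
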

\begin{proof}
Recall that being a strong combinatorial grape is the strongest of the properties in the statement.

Let $(\Delta,X)$ be a grape in any of its variants. Let us prove that  $\Delta^*$ is a grape of the same type. We use induction on $|X|$, the case $|X|\leq 1$ being trivial. Let $|X| > 1$. If  $\Delta$ is the void complex, then $\Delta^* = 2^X$, which is a strong combinatorial grape.  If $\Delta$ is the irrelevant complex, then $\Delta^* = 2^X \setminus X$, which is a strong combinatorial grape by  Lemma~\ref{cone}, (\ref{cone2}). If $\Delta$ has exactly one vertex, then  $\Delta^*$ is a cone by  Proposition~\ref{duale_cono}, (\ref{dcone2}) and a strong combinatorial grape by  Lemma~\ref{cone}, (\ref{cone1}).

For the rest of the proof, we need different arguments for the different types of grape.

\emph{Argument for combinatorial grapes}. 
Let $a$ in $X$ be such that $\operatorname{lk}_{a}(\Delta,X)$ is contained in a cone $C$ contained in $\operatorname{dl}_{a}(\Delta,X)$  and both $\operatorname{lk}_{a}(\Delta,X)$ and  $\operatorname{dl}_{a}(\Delta,X)$ are combinatorial grapes. 
By Proposition~\ref{duale_cono},  (\ref{dcone1}) and (\ref{dcone2}), the simplicial complex  $C^*$ is a cone  and  satisfies 
$$  (\operatorname{dl}_{a}(\Delta,X))^*  \subseteq C^*  \subseteq  (\operatorname{lk}_{a}(\Delta,X))^*,$$ 
which becomes 
$$  \operatorname{lk}_{a}(\Delta^*,X) \subseteq C^*  \subseteq  \operatorname{dl}_{a}(\Delta^*,X),$$ 
by Proposition~\ref{duale_cono}, (\ref{dl-lk-lk-dl}). Since $\operatorname{dl}_{a}(\Delta,X)$ and $ \operatorname{lk}_{a}(\Delta,X)$ are combinatorial grapes with ground set $X\setminus \{a\}$, their duals also are combinatorial grapes by the induction hypothesis. Thus $  \operatorname{lk}_{a}(\Delta^*,X)$ and $ \operatorname{dl}_{a}(\Delta^*,X)$ are combinatorial grapes, and the assertion is proved.

\emph{Argument for strong combinatorial grapes}. 
Let $a$ in $X$ be such that  at least one of $\operatorname{lk}_{a}(\Delta)$ and $\operatorname{dl}_{a}(\Delta)$ is a cone 
and both $\operatorname{dl}_{a}(\Delta)$ and  $\operatorname{lk}_{a}(\Delta)$ are strong combinatorial grapes.
 By Proposition~\ref{duale_cono}, (\ref{dl-lk-lk-dl}), we have 
$\operatorname{lk}_{a}(\Delta^*,X)= (\operatorname{dl}_{a}(\Delta,X))^*$ and $  \operatorname{dl}_{a}(\Delta^*,X)=  (\operatorname{lk}_{a}(\Delta,X))^*$. Hence,   at least one of $\operatorname{lk}_{a}(\Delta^*,X)$ and $  \operatorname{dl}_{a}(\Delta^*,X)$ is a cone by  Proposition~\ref{duale_cono},  (\ref{dcone2}).
Since $\operatorname{dl}_{a}(\Delta,X)$ and $ \operatorname{lk}_{a}(\Delta,X)$ are strong combinatorial grapes with ground set $X\setminus \{a\}$, their duals also are strong combinatorial grapes by the induction hypothesis. 
Thus $  \operatorname{lk}_{a}(\Delta^*,X)$ and $ \operatorname{dl}_{a}(\Delta^*,X)$ are strong combinatorial grapes, and the assertion is proved.

\emph{Argument for weak combinatorial grapes}. 
Let $a$ in $X$ be such that both $\operatorname{dl}_{a}(\Delta)$ and  $\operatorname{lk}_{a}(\Delta)$ are weak combinatorial grapes, and let $\Gamma$ be a simplicial complex that is simple-homotopy equivalent to the void complex and satifies  $\operatorname{lk}_{a}(\Delta) \subset \Gamma \subset \operatorname{dl}_{a}(\Delta)$.

By Proposition~\ref{duale_cono},  (\ref{dcone1}) and (\ref{dcone3}), the simplicial complex  $\Gamma^*$ is  simple-homotopic to the void complex  and  satisfies 
$$  (\operatorname{dl}_{a}(\Delta,X))^*  \subseteq \Gamma^*  \subseteq  (\operatorname{lk}_{a}(\Delta,X))^*,$$ 
which becomes 
$$  \operatorname{lk}_{a}(\Delta^*,X) \subseteq \Gamma^*  \subseteq  \operatorname{dl}_{a}(\Delta^*,X),$$ 
by Proposition~\ref{duale_cono}, (\ref{dl-lk-lk-dl}). Since $\operatorname{dl}_{a}(\Delta,X)$ and $ \operatorname{lk}_{a}(\Delta,X)$ are weak combinatorial grapes with ground set $X\setminus \{a\}$, their duals also are weak combinatorial grapes by the induction hypothesis. Thus $  \operatorname{lk}_{a}(\Delta^*,X)$ and $ \operatorname{dl}_{a}(\Delta^*,X)$ are weak combinatorial grapes, and the assertion is proved.

\emph{Argument for strong-weak combinatorial grapes}. 
Let $a$ in $X$ be such that  at least one of $\operatorname{lk}_{a}(\Delta)$ and $\operatorname{dl}_{a}(\Delta)$ is simple-homotopic to the void complex  
and both $\operatorname{dl}_{a}(\Delta)$ and  $\operatorname{lk}_{a}(\Delta)$ are strong-weak combinatorial grapes.
 By Proposition~\ref{duale_cono}, (\ref{dl-lk-lk-dl}), we have 
$\operatorname{lk}_{a}(\Delta^*,X)= (\operatorname{dl}_{a}(\Delta,X))^*$ and $  \operatorname{dl}_{a}(\Delta^*,X)=  (\operatorname{lk}_{a}(\Delta,X))^*$. Hence,   at least one of $\operatorname{lk}_{a}(\Delta^*,X)$ and $  \operatorname{dl}_{a}(\Delta^*,X)$ is simple-homotopic to the void complex by  Proposition~\ref{duale_cono},  (\ref{dcone3}).
Since $\operatorname{dl}_{a}(\Delta,X)$ and $ \operatorname{lk}_{a}(\Delta,X)$ are strong-weak combinatorial grapes with ground set $X\setminus \{a\}$, their duals also are strong-weak combinatorial grapes by the induction hypothesis. Thus $  \operatorname{lk}_{a}(\Delta^*,X)$ and $ \operatorname{dl}_{a}(\Delta^*,X)$ are strong-weak combinatorial grapes, and the assertion is proved.
\end{proof}

\begin{rmk}
Note that the Alexander dual version of Lemma~\ref{togli uno} is the following trivial statement:
if $(\Delta, X)$ is a simplicial complex and $x$ in $X$ is such that $ \operatorname{dl}_{x}(\Delta,X) = 2^{X\setminus \{x\}}$, then we have that $\Delta$ is  grape of any type  if and only if $\operatorname{lk}_{x}(\Delta,X)$  is a  grape of the same type.
Indeed,  $x $ is not a vertex of a simplicial complex $\Gamma$ if and only if $X\setminus \{x\} \in \Gamma^*$, i.e., if and only if  $ \operatorname{dl}_{x}(\Gamma^*,X) = 2^{X\setminus \{x\}}$. In that case,  $ \operatorname{lk}_{x}(\Gamma^*,X)$ is the dual of $\Gamma$ as a simplicial complex with ground set $X\setminus  \{x\}$ .
\end{rmk}

The following is a corollary of  Theorems~\ref{strongcombgrapesim} and~\ref{grappolo-chiuso}. 
Recall that $\partial \beta_n$ denotes the boundary of the $n$-dimensional cross-polytope, that is,
$$
\partial \beta_n = S^0 * S^0 * \dots * S^0 \quad (n \text{ times}),
$$
which gives a triangulation of the $(n-1)$-dimensional sphere.

\begin{cor}
\label{AlexDual}
Let  $(\Delta,X)$ be a strong combinatorial grape with $|X|>0$. 
\begin{enumerate}
\item 
If $\Delta$  is simple-homotopy equivalent to the void complex, then $\Delta^*$ is a strong combinatorial grape which is simple-homotopy equivalent to the void complex.
\item  If $\Delta$ is simple-homotopy equivalent to  $\partial \beta_n$, then $\Delta^*$ is a strong combinatorial grape which is simple-homotopy equivalent to  $\partial \beta_{(|X|-n-1)}$.
\end{enumerate}
\end{cor}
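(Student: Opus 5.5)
By Theorem~\ref{grappolo-chiuso}, $\Delta^*$ is a strong combinatorial grape. By Theorem~\ref{strongcombgrapesim}, $\Delta^*$ is therefore simple-homotopy equivalent either to the void complex or to some $\partial\beta_m$. What remains is to decide which case occurs and to compute $m$. Homotopy invariants suffice for this, because the two possible targets are already known up to simple-homotopy. The void complex is contractible (all reduced homology vanishes, by convention). The complex $\partial\beta_m$ is an $(m-1)$-sphere, so its only nonzero reduced homology is $\tilde H_{m-1}\cong\mathbb Z$; for $m=0$ this is the irrelevant complex, which has $\tilde H_{-1}\neq 0$. Hence these possibilities are pairwise distinguished by reduced (co)homology, including degree $-1$.

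The tool is Combinatorial Alexander Duality: for $|X|>0$ and any $i$,
\[
\tilde H_i(\Delta^*)\cong \tilde H^{|X|-i-3}(\Delta).
\]
This holds with the standard conventions for the void and irrelevant complexes, and for $\Delta=2^X$, whose dual is the void complex.

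\textbf{Case (1).} If $\Delta$ is simple-homotopy equivalent to the void complex, all its reduced cohomology vanishes. By duality, so does all reduced homology of $\Delta^*$. This rules out every $\partial\beta_m$, so $\Delta^*$ is simple-homotopy equivalent to the void complex.

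\textbf{Case (2).} If $\Delta\simeq\partial\beta_n$, the only nonzero reduced cohomology of $\Delta$ is in degree $n-1$. By duality, $\Delta^*$ has nonzero reduced homology exactly when $|X|-i-3=n-1$, that is, in the single degree $i=|X|-n-2$. This rules out the void alternative. It also forces $\Delta^*\simeq\partial\beta_m$ with $m-1=|X|-n-2$, so $m=|X|-n-1$.

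The main obstacle I expect is the degenerate cases, where the duality formula must be checked against the paper's conventions. These include $\Delta$ or $\Delta^*$ being the irrelevant complex ($n=0$, where $\Delta^*=2^X\setminus\{X\}\cong\partial\beta_{|X|-1}$ as a sphere of dimension $|X|-2$), the void complex, or a full simplex. It also includes cases where $X$ has non-vertices. I would handle these directly rather than through the duality theorem, using the explicit computations at the start of the proof of Theorem~\ref{grappolo-chiuso}, and check that they agree with the formula $m=|X|-n-1$.

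Alternatively, one could avoid homology entirely. One would induct along the recursive decomposition of Theorem~\ref{strongcombgrapesim}, using Proposition~\ref{duale_cono}(\ref{dl-lk-lk-dl}). Under duality, the case ``$\operatorname{dl}_a$ is a cone'' becomes ``$\operatorname{lk}_a$ is a cone'' and vice versa, so a suspension step for $\Delta$ (raising $n$ by one while $|X|$ drops by one) corresponds to a collapse step for $\Delta^*$. Tracking $|X|-n-1$ through each step would then give an induction. This serves as a backup if the homological conventions become awkward.
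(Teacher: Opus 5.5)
Your proposal is correct and follows the paper's proof. Theorem~\ref{grappolo-chiuso} makes $\Delta^*$ a strong combinatorial grape, Theorem~\ref{strongcombgrapesim} leaves only the void complex or some $\partial\beta_m$, and Combinatorial Alexander Duality decides between them and gives $m=|X|-n-1$; since the duality already holds for every $|X|>0$ with the standard conventions, as you note, neither the separate handling of degenerate cases nor the backup induction is needed.
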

\begin{proof}
The statements follow from Theorems~\ref{strongcombgrapesim} and~\ref{grappolo-chiuso}, together with the Combinatorial Alexander Duality, which 
states that, for any simplicial complex $(\Gamma,X)$ and any abelian group $G$, there are isomorphisms $\widetilde{H}_i(\Gamma;G) \cong \widetilde{H}^{|X|-i-3}(\Gamma^*;G)$ and $\widetilde{H}^i(\Gamma;G) \cong \widetilde{H}_{|X|-i-3}(\Gamma^*;G)$ where $\widetilde{H}$ denotes reduced simplicial (co)homology (see, for example, \cite[Theorem~2.1]{BCP}).
\end{proof}

\section{Applications}
\label{appli}
In this section, we present several applications of the results from the previous section, all concerning simplicial complexes arising from graphs, which have been extensively studied. We believe that further applications could be developed in the future.

\subsection{Simplicial complexes associated with forests}
In \cite{MT2}, a few simplicial complexes are shown to be grapes and are then shown to belong to certain homotopy classes. Thanks to the results in the previous sections, we can directly prove that  also their duals are grapes and find their simple-homotopy classes. 

In the following list, we fix our notation on a finite simple graph $G=(V,E)$ (undirected graph with no loops or multiple edges and so we may think of an edge also as a subset of $V$ of cardinality 2):
\smallskip 

$
\begin{array}{@{\hskip-1.3pt}l@{\qquad}l}
V &  \textrm{the vertex set of $G$}, 
\\
E &  \textrm{the edge set of $G$}, 
\\
N(S)& \textrm{the open  neighborhood of $S$, i.e. } \bigl\{ w \in V :  \exists s \in S \textrm{ with } \{ s , w \} \in E \bigr\},  
\\
N[S]& \textrm{the closed  neighborhood of $S$, i.e. } \bigl\{ w \in V :  \exists s \in S \textrm{ with } \{ s , w \} \in E \bigr\} \cup S,  
\\
\gamma (G) &  \textrm{the domination number of $G$, i.e. } \min \bigl\{ |D| : D \textrm{ is a dominating set of $G$}  \bigr\},
\\ 

i (G) &  \textrm{the independent domination number of $G$, i.e., } 
\\
 & \min \bigl\{ |D| : D \textrm{ is an independent  dominating set of $G$}  \bigr\},
\\ 

\alpha _0 (G) &  \textrm{the vertex covering  number of $G$, i.e., } \min \bigl\{ |C| :  C \text{ is a vertex cover of $G$} \bigr\},
\\ 
\beta _1 (G) &  \textrm{the matching  number of $G$, i.e., } \max \bigl\{ |M| :  M \text{ is a matching of $G$} \bigr\},
\\ 
(\operatorname{Ind}(G),V) &  \textrm{the independence complex of $G$ ($F$ is a face  if it is independent)}, 
\\
(\operatorname{Dom}(G),V) &  \textrm{the dominance complex of $G$ ($F$ is a face  if $V\setminus F$ is a dominating set},  
\\
 & \textrm{i.e., $ (V\setminus F) \cap N[\{v\}] \neq \emptyset$ for all $v$ in $V$)}, 
\\
(\operatorname{EC}(G),E) &  \textrm{the edge covering complex of $G$ ($F$ is a face  if $E\setminus F$ is an edge cover},  
\\
& \textrm{i.e.,  for every $v$ in $V$ there exists $e$ in $E\setminus F$ with $v\in e$)}, 
\\
(\operatorname{ED}(G),E) &  \textrm{the edge dominance complex of $G$ ($F$ is a face  if $E\setminus F$ is a dominating set of}  
\\
& \textrm{the line dual of $G$,  i.e.,  for every $e$ in $E$ there exists $e'$ in $E\setminus F$ with $e' \cap e \neq \emptyset$)}, 
\end{array}$
\bigskip

Recall that a subset 
$D$ of  $V$ is \emph{dominating} if $N[D] = V$,  \emph{independent} if no two vertices in $D$ are adjacent (i.e., $|D \cap e| \leq 1$ for all $e$ in $E$), and is  a   \emph{vertex cover of $G$} if every edge of $G$ contains a vertex of $D$.  If $D$ is not a vertex cover of $G$, then we say that $D$ is a \emph{noncover} of $G$. An \emph{edge cover of $G$} is a subset $S$ of  $E$ such that the union of all the endpoints of the edges in $S$ is $V$.  
A \emph{matching of $G$} is a subset $M$ of $E$ of pairwise disjoint edges.  The \emph{line dual}   of $G$, denoted $\operatorname{ld}(G)$, is the graph whose vertex set is $E$ and where $\{e_1, e_2\}$ is an edge if $e_1 \neq e_2$ and  $e_1 \cap e_2 \neq \emptyset$.

We refer the reader to \cite{D} for all undefined notation on graph theory.

The results in \cite{MT2} concerning the independent complex $\operatorname{Ind}(G)$,  the dominance complex $ \operatorname{Dom}(G) $,  the edge covering complex $\operatorname{EC}(G)$, and  the edge dominance complex $\operatorname{ED}(G)$ for a forest $G$ are formulated in terms of combinatorial grapes and homotopy types. In this paper, we reformulate them in terms of strong combinatorial grapes and simple-homotopy types.
\begin{thm}
\label{MMMTTT}
Let $G$ be a forest. Then $\operatorname{Ind}(G)$, $ \operatorname{Dom}(G) $, $\operatorname{EC}(G)$ and $\operatorname{ED}(G)$ are all strong combinatorial grapes. Moreover 
\begin{enumerate}
\item $\operatorname{Ind}(G)$ is simple-homotopy equivalent to either the void complex 
or $\partial \beta_{i(G)}$, and in the latter case $i(G)=\gamma(G)$;
\item  $ \operatorname{Dom}(G) $ is simple-homotopy equivalent to $\partial \beta_{\alpha_0(G)}$;
\item $\operatorname{EC}(G)$ is simple-homotopy equivalent to either the void complex  or $\partial \beta_{(|E|-|V|+i(G))}$, and in the latter case $i(G)=\gamma(G)$;
\item $\operatorname{ED}(G)$ is simple-homotopy equivalent to $\partial \beta_{(|E|-\alpha_0(G))}$.
\end{enumerate}
\end{thm}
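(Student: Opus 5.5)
The plan is to prove the four grape claims one at a time, by induction on the number of vertices of the forest $G$, using the recursive decompositions of \cite{MT2} at a leaf. Only \emph{strong} combinatorial grapes are needed here, so at each step I must check that the chosen deletion or link is actually a cone, not merely contained in a cone as in the combinatorial-grape condition. Once each complex is known to be a strong combinatorial grape, Theorem~\ref{strongcombgrapesim} shows it is simple-homotopy equivalent to the void complex or to some $\partial\beta_n$. The value of $n$ is then forced by reduced homology, since $\partial\beta_n$ is an $(n-1)$-sphere. So the homotopy types computed in \cite{MT2} (contractible, or a sphere of the stated dimension, with $i(G)=\gamma(G)$ in the sphere case) transfer directly to the simple-homotopy statements (1)--(4). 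No new dimension counting is needed beyond what \cite{MT2} provides. The role of Corollary~\ref{AlexDual} is to let the dual complexes (non-independent sets, non-dominating sets, and so on) inherit these results at no extra cost.

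For $\operatorname{Ind}(G)$, if $G$ has an isolated vertex $u$, then $\operatorname{Ind}(G)$ is a cone with apex $u$ and Lemma~\ref{cone} applies. Otherwise I take a leaf $v$ with neighbour $w$ and set $a=w$. Then $\operatorname{dl}_w(\operatorname{Ind}(G))=\operatorname{Ind}(G-w)$, in which $v$ is isolated, so this deletion is a cone with apex $v$. Also $\operatorname{lk}_w(\operatorname{Ind}(G))=\operatorname{Ind}(G-N[w])$, up to non-vertices of the ground set, which Lemma~\ref{togli uno} allows us to ignore. Both $G-w$ and $G-N[w]$ are forests, so the induction hypothesis makes them strong combinatorial grapes. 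This also reproduces the recursion $\operatorname{Ind}(G)\simeq\operatorname{susp}\operatorname{Ind}(G-N[w])$ from Proposition~\ref{simple-hom}(\ref{simple-hom1}).

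For $\operatorname{EC}(G)$ and $\operatorname{ED}(G)$, which live on the ground set $E$, I would use a leaf edge $e=\{v,w\}$ and also edges at $w$, mirroring \cite{MT2}. The key observation is that $e$ can never lie in a face of $\operatorname{EC}(G)$, because the leaf $v$ must stay covered. Hence by Lemma~\ref{togli uno} we may discard $e$ from the ground set and force it into every complement. Once $e$ is forced into every complement, $w$ is always covered. Then for another edge $f=\{w,x\}$, the deletion or link at $f$ should be a cone with apex at some other edge incident to $w$, or should reduce to the edge-cover complex of a smaller forest. The same pattern is expected for $\operatorname{ED}$, where the complements must dominate in $\operatorname{ld}(G)$. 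For $\operatorname{Dom}(G)$ the analogous choice is $a=w$, or a vertex adjacent to $w$; the leaf $v$ forces $v$ or $w$ to lie in every complement, and the aim is to find an apex among the other neighbours of $w$.

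The main obstacle I expect is exactly these cone verifications for $\operatorname{Dom}$, $\operatorname{EC}$ and $\operatorname{ED}$. The decompositions in \cite{MT2} only guarantee that the link is contained in a cone inside the deletion, which is weaker than what is needed here. If a direct choice of $a$ fails, my first fallback is to pass to Alexander duals via Theorem~\ref{grappolo-chiuso}. For instance, $\operatorname{Dom}(G)^*$ is the complex of non-dominating sets, and it is a strong combinatorial grape if and only if $\operatorname{Dom}(G)$ is, so it suffices to find the cone on whichever side is easier. The second fallback is to pick the vertex $a$ so that the link, rather than the deletion, becomes a cone, which then lets Proposition~\ref{simple-hom}(\ref{simple-hom2}) carry the argument.
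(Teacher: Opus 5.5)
Your argument for $\operatorname{Ind}(G)$ is correct, and reading off $n$ from the homotopy types in \cite{MT2} once Theorem~\ref{strongcombgrapesim} applies is also what the paper does. The gap is in the other three complexes. For $\operatorname{Dom}(G)$, $\operatorname{EC}(G)$ and $\operatorname{ED}(G)$ you never exhibit an $a$ at which $\operatorname{dl}_a$ or $\operatorname{lk}_a$ is actually a cone. You only say it ``should be'' or ``is expected'', and you suspect the decompositions of \cite{MT2} do not provide one. Yet this verification is the entire content of the statement beyond \cite{MT2}. The paper's proof consists precisely of the observation that the decompositions in \cite{MT2} do have a cone on one side at every step.

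Your fallbacks cannot replace this. By Proposition~\ref{duale_cono}, (\ref{dcone2}) and (\ref{dl-lk-lk-dl}), $\operatorname{dl}_a(\Delta)$ is a cone if and only if $\operatorname{lk}_a(\Delta^*)$ is. So passing to $\Delta^*$ only moves the cone to the other side at the same $a$ and leaves the work unchanged. Likewise, ``choose $a$ so that the link is a cone'' merely restates the definition. Your concrete plan for $\operatorname{EC}$ also fails. Take the path $u_0u_1\cdots u_6$ and the leaf $u_0$. The only edge at $u_1$ besides the leaf edge is $f=\{u_1,u_2\}$. Then $\operatorname{dl}_f(\operatorname{EC}(G))$ is an edge plus an isolated vertex, and $\operatorname{lk}_f(\operatorname{EC}(G))$ is two points, so neither is a cone. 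The leaf $u_6$ behaves symmetrically.

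What is missing is a simple apex criterion. If $a\neq b$ and every minimal non-face of $\Delta$ containing $b$ also contains $a$, then $\operatorname{dl}_a(\Delta)$ is a cone with apex $b$. Indeed, a minimal non-face inside $F\cup\{b\}$ must contain $b$, hence $a$, which forces $a\in F$. Each of the four complexes consists of the sets containing no member of a fixed family:
\begin{itemize}
\item for $\operatorname{Ind}$, the edges of $G$;
\item for $\operatorname{Dom}$, the closed neighbourhoods $N[u]$;
\item for $\operatorname{EC}$, the sets $\delta(x)$ of edges at $x$;
\item for $\operatorname{ED}$, the sets $\delta(x)\cup\delta(y)$ for $\{x,y\}\in E$.
\end{itemize}
Fix a leaf $v$ with neighbour $w$ and let $e=\{v,w\}$.

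\emph{Dom.} Take $a=w$. Every $N[u]$ containing $v$ contains $w$, so the apex of $\operatorname{dl}_w$ is the leaf $v$ itself, not another neighbour of $w$. Moreover, $\operatorname{lk}_w$ is $\operatorname{Dom}(G-\{v,w\})$ up to the non-vertex $v$.

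\emph{ED.} Take $a=f$ any other edge at $w$; here your guess is right. Every $\delta(x)\cup\delta(y)$ containing $e$ contains $\delta(w)\ni f$, so $e$ is an apex of $\operatorname{dl}_f$. Also $\operatorname{lk}_f=\operatorname{ED}(G-f)$, where $G-f$ is $G$ with the edge $f$ removed.

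\emph{EC.} The edge $e$ lies in no face, so $\delta(w)$ is not a minimal non-face. Take a non-leaf neighbour $y$ of $w$ and an edge $g\neq\{w,y\}$ at $y$. Then $\{w,y\}$ is an apex of $\operatorname{dl}_g$, and $\operatorname{lk}_g=\operatorname{EC}(G-g)$. For the path above, this means $a=\{u_2,u_3\}$.

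The degenerate situations are harmless. If $e$ is the only edge at $w$, or the component of $w$ is a star, its edges lie in no face and Lemma~\ref{togli uno} removes them. An isolated vertex makes $\operatorname{EC}(G)$ void. In every case the link is the same kind of complex on a smaller forest, so induction on $|V|+|E|$ closes. Theorem~\ref{strongcombgrapesim} then applies as you intended.
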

\begin{proof}
The  proofs  in  \cite{MT2} of the fact that these simplicial complexes are combinatorial grapes in fact show that these complexes are strong combinatorial grapes. Hence, we may apply Theorem~\ref{strongcombgrapesim}.
\end{proof}
\begin{rmk}
Recall that, by a well-known result of K\"onig,  we have $\alpha _0 (T) = \beta _1 (T)$ for all bipartite graphs $T$ (see~\cite[Theorem~2.1.1]{D}). So, in particular, this equality holds for forests.
 \end{rmk}

Let us describe the faces of the Alexander duals of the simplicial complexes described above:
\begin{itemize}
\item $F$ is a face of  $\operatorname{Ind}(G)^*$ if and only if $F$ is a noncover of $G$,
\item  $F$ is a face of  $\operatorname{Dom}(G)^*$ if and only if $F$ is not dominating,
\item  $F$ is a face of  $\operatorname{EC}(G)^*$ if and only if $F$ is not an edge cover,
\item  $F$ is a face of  $\operatorname{ED}(G)^*$ if and only if there exists an edge in $E$ that is disjoint from every edge in $F$.
%$\big( \cup_{e\in F} e\big)$ fails to meet all edges of $G$
\end{itemize} 

\begin{rmk}
The complex $\operatorname{Ind}(G)^*$ is called the \emph{noncover complex} of $G$ in \cite{KK}.
\end{rmk}

\begin{rmk}
The complex $\operatorname{Dom}(G)^*$ is Lov\'asz’s neighborhood complex of the complement $\overline{G}$ of $G$. The neighborhood complex of a graph $T = (V_T, E_T)$ was introduced by Lov\'asz in connection with Kneser’s conjecture in \cite{L} and is the simplicial complex on $V_T$ whose faces are the subsets $F$ of  $V_T$ having a common neighbor; that is, $F$ is a face if and only if there exists $v \in V_T$ such that $N(v) \supseteq F$ (note that since there are no loops, a vertex is not a neighbor of itself). The complement $\overline{T}$ of a graph $T$ is the graph on the same vertex set as $T$ in which two vertices are adjacent if and only if they are not adjacent in $T$.
\end{rmk}
\begin{rmk}
The complex $\operatorname{ED}(G)^*$ coincides with $\operatorname{Dom}(\operatorname{ld}(G))^*$, the Alexander dual of the dominance complex of the line dual of $G$.
\end{rmk}

\begin{cor}
\label{eccoecco}
Let $G=(V,E)$ be a forest. Then $\operatorname{Ind}(G)^*$, $ \operatorname{Dom}(G) ^*$, $\operatorname{EC}(G)^*$ and $\operatorname{ED}(G)^*$ are all strong combinatorial grapes. Moreover 
\begin{enumerate}
\item $\operatorname{Ind}(G)^*$ is simple-homotopy equivalent to either the void complex  or $\partial \beta_{(|V|-i(G)-1)}$, and in the latter case $i(G)=\gamma(G)$;
\item  $ \operatorname{Dom}(G)^* $ is simple-homotopy equivalent to $\partial \beta_{(|V|-\alpha_0(G)-1)}$;
\item $\operatorname{EC}(G)^*$ is simple-homotopy equivalent to either the void complex  or $\partial \beta_{(|V|-i(G)-1)}$, and in the latter case $i(G)=\gamma(G)$;
\item $\operatorname{ED}(G)^*$ is simple-homotopy equivalent to $\partial \beta_{(\alpha_0(G)-1)}$.
\end{enumerate}
\end{cor}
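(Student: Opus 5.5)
The plan is to deduce Corollary~\ref{eccoecco} directly from Theorem~\ref{MMMTTT} combined with Corollary~\ref{AlexDual}. Theorem~\ref{MMMTTT} already says that each of the four complexes is a strong combinatorial grape. So Theorem~\ref{grappolo-chiuso} immediately gives that $\operatorname{Ind}(G)^*$, $\operatorname{Dom}(G)^*$, $\operatorname{EC}(G)^*$ and $\operatorname{ED}(G)^*$ are strong combinatorial grapes.

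For the simple-homotopy types, I apply Corollary~\ref{AlexDual} to each complex with its own ground set. If a complex is simple-homotopy equivalent to the void complex, so is its dual. If it is equivalent to $\partial\beta_n$, its dual is equivalent to $\partial\beta_{|X|-n-1}$. The side condition ``$i(G)=\gamma(G)$ in the sphere case'' transfers verbatim, because the sphere case for the dual occurs exactly when it occurs for the original complex. The arithmetic for each case is as follows.
\begin{enumerate}
\item $\operatorname{Ind}(G)$ has $X=V$ and $n=i(G)$, giving $|V|-i(G)-1$.
\item $\operatorname{Dom}(G)$ has $X=V$ and $n=\alpha_0(G)$, giving $|V|-\alpha_0(G)-1$.
\item $\operatorname{EC}(G)$ has $X=E$ and $n=|E|-|V|+i(G)$, giving $|E|-(|E|-|V|+i(G))-1=|V|-i(G)-1$.
\item $\operatorname{ED}(G)$ has $X=E$ and $n=|E|-\alpha_0(G)$, giving $\alpha_0(G)-1$.
\end{enumerate}

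The only point needing care is the hypothesis $|X|>0$ in Corollary~\ref{AlexDual}. If $V=\emptyset$, there is nothing to prove for $\operatorname{Ind}$ and $\operatorname{Dom}$, or it is checked by hand. If $E=\emptyset$, then $G$ is edgeless. In that case $\operatorname{EC}(G)=\operatorname{ED}(G)=\{\emptyset\}$ on the empty ground set, which is $\partial\beta_0$. Its dual on the empty ground set is the void complex. This is consistent with the formulas when they are read with the convention that $\partial\beta_{-1}$ means the void complex. Indeed, here $i(G)=|V|$, so $|V|-i(G)-1=-1$, and $\alpha_0(G)=0$, so $\alpha_0(G)-1=-1$. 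I would either adopt this convention or state the case separately.

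Beyond this degenerate case, I expect no real obstacle. The main point to verify is that Corollary~\ref{AlexDual} is applied with the correct ground set: $E$ rather than $V$ for $\operatorname{EC}$ and $\operatorname{ED}$, since the Alexander dual depends on $X$.
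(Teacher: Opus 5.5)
Your main argument matches the paper's proof, which consists of citing Theorem~\ref{MMMTTT}, Theorem~\ref{grappolo-chiuso} and Corollary~\ref{AlexDual}. Your four computations of $|X|-n-1$ are correct, with $X=V$ for $\operatorname{Ind},\operatorname{Dom}$ and $X=E$ for $\operatorname{EC},\operatorname{ED}$.

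The one error is in your degenerate case. Suppose $E=\emptyset$ but $V\neq\emptyset$. Then $\operatorname{EC}(G)$ is the void complex, not $\{\emptyset\}$: the empty edge set covers no vertex, so no $F$ qualifies as a face. Consequently $\operatorname{EC}(G)^*=\{\emptyset\}=\partial\beta_0$ on the empty ground set, while $|V|-i(G)-1=-1$.

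The irrelevant complex is not simple-homotopy equivalent to the void complex. The quantity $\sum_{F\in\Delta}(-1)^{|F|}$ is unchanged by elementary collapses and expansions, and it equals $1$ for $\{\emptyset\}$ but $0$ for the void complex. So under your convention $\partial\beta_{-1}=$ void, item (3) genuinely fails here; the simplest instance is the one-vertex graph.

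The opposite convention $\partial\beta_{-1}=\{\emptyset\}$ does not help. It would break item (2) when $V=\emptyset$ and item (4) when $E=\emptyset$, since $\operatorname{Dom}(G)^*$ and $\operatorname{ED}(G)^*$ are then void.

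Hence no convention rescues the degenerate case, and you cannot claim consistency there. The correct fix is to assume $E\neq\emptyset$ for items (3) and (4), and $V\neq\emptyset$ for item (2). Item (1) survives $V=\emptyset$ via its ``void'' alternative. These assumptions are exactly the hypothesis $|X|>0$ of Corollary~\ref{AlexDual}, which the paper's one-line proof uses tacitly.
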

\begin{proof}
It follows by Theorem~\ref{grappolo-chiuso}, Corollary~\ref{AlexDual}, and Theorem~\ref{MMMTTT}.
\end{proof}
\begin{rmk}
Corollary~\ref{eccoecco} explains the numerology in Theorem~\ref{MMMTTT} for  $\operatorname{EC}(G)$ and  $\operatorname{ED}(G)$, that are arguably Alexander duals of more natural simplicial complexes.
\end{rmk}

\begin{rmk}
By \cite[Theorem~4.16]{MT2}, the forests $G$ such that $\operatorname{Ind}(G)$ and $\operatorname{Ind}(G)^*$ are simple-homotopy equivalent to the void complex are exactly those such that $\operatorname{EC}(G)$ and $\operatorname{EC}(G)^*$ are simple-homotopy equivalent to the void complex. Similarly, the forests $G$ such that $\operatorname{Ind}(G)$ and $\operatorname{Ind}(G)^*$ are simple-homotopy equivalent to the boundary of a cross-polytope are exactly those such that $\operatorname{EC}(G)$ and $\operatorname{EC}(G)^*$ are simple-homotopy equivalent to the boundary of a cross-polytope. 
In \cite{MT3}, the former are called \emph{conical} and the latter \emph{spherical}. Characterizations of conical and spherical forests in terms of dominating sets, independent dominating sets, and edge covers are given in \cite[Theorem~4.10]{MT3} (see also \cite[Theorem~4.9]{MT3}).
\end{rmk}

\subsection{The path-missing and path-free complexes}
In \cite{GKL}, the authors introduce two simplicial complexes attached to a directed graph with two distinguished vertices, which they call the \emph{path-missing complex} and \emph{path-free complex}, and study their homotopy classes. These two simplicial complexes are  Alexander dual to each other. In this subsection, we show that such complexes follow under the theory of grapes and so we may apply the results in the previous sections to obtain their simple-homotopy classes. 

Let $G = (V,E)$ be a directed multigraph (different arcs sharing both source and target are allowed, as well as loops), where  $V$ is its vertex set and  $E$ its arc set. 
We let $V'_G$ be the set of nonsinks, i.e., 
$V'_G=\{v\in V : \text{there exists an arc whose source is $v$}\}$.
A \emph{walk} from $v_0$ to $v_n$ is a sequence $(v_0, e_1, v_1, e_2, v_2, \ldots, e_n, v_n)$, where $n$ is a nonnegative integer, $v_0, v_1, \ldots, v_n \in V$,  $e_1, e_2, \ldots, e_n \in E$, and $e_i$ has source $v_{i-1}$ and target $v_i$, for $i\in [n]$.
A \emph{path} is a walk that does not visit any vertex more than once. A \emph{cycle} is a nontrivial walk such that the first and last vertices coincide.
We say that a subset $F$ of $E$ \emph{contains} a path $p$ if $F$ contains each arc of $p$. 

Let  $s$ and $t$ be two (possibly coinciding) distinguished vertices of $G$.  
The \emph{path-free complex} $\mathcal{PF}(G,s,t)$ and the \emph{path-missing complex} $\mathcal{PM}(G,s,t)$ are simplicial complexes on the ground set $E$ defined as follows:
    \begin{align*}
      \mathcal{PF}(G,s,t) &= \{ F  : F \text{ contains no path from $s$ to $t$}\} ; \\
\mathcal{PM}(G,s,t)&=      \{ F  : E\setminus F \text{ contains a path from $s$ to $t$ }\} .
  \end{align*}
Note that, when $|E|=0$, we have that:
\begin{itemize}
\item  if $s\neq t$, then $ \mathcal{PF}(G,s,t)$ is the irrelevant complex and  $\mathcal{PM}(G,s,t)$ is the void complex;
\item  if $s=t$, then $ \mathcal{PF}(G,s,t)$ is the void complex and  $\mathcal{PM}(G,s,t)$ is the irrelevant complex. 
\end{itemize}

In the following lemma, we collect the results in Lemmas~3.6 and~3.10 of \cite{GKL}, which are needed in the proof of Theorem~\ref{path_free_and_missing}. An arc $e$ in $E$ is  \emph{$G_{s,t}$-useless} (or simply $G$-useless or useless if no confusion arises)  if no path from $s$ to $t$ contains $e$. In particular, any loop is useless.

\begin{lem}
\label{GKLresults}
Let $G = (V,E)$ be a directed multigraph.
\begin{enumerate}
\item
\label{GKL1}
Let $e \in E$. Then:
\begin{enumerate}
\item[(a)] $\operatorname{dl}_{e}(\mathcal{PF}(G,s,t)) = \mathcal{PF}(G \backslash e,s,t)$;
\item[(b)] If the source of $e$ is $s$, then $\operatorname{lk}_{e}(\mathcal{PF}(G,s,t)) = \mathcal{PF}(G / e,s,t)$.	
\end{enumerate}
\item
\label{GKL2}  Let $e$ be a non-useless arc with source $s$. If $|E| > 1$ and there are no other arcs with the same target as $e$, then  $G\backslash e$ has a useless arc.
\end{enumerate}
\end{lem}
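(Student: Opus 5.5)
The lemma collects results of \cite{GKL} (their Lemmas~3.6 and~3.10), so the natural plan is to verify each item directly from the definitions of deletion, link, deletion of an arc $G\backslash e$, and contraction $G/e$. Here $G\backslash e$ is $G$ with $e$ removed. $G/e$ is obtained by identifying the source and target of $e$ and removing $e$. When the source of $e$ is $s$, the merged vertex is renamed $s$, and $t$ becomes $s$ if $t$ was the target of $e$. Both statements are then checked face by face on the ground set $E\setminus\{e\}$.

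For (\ref{GKL1})(a), a face of $\operatorname{dl}_e(\mathcal{PF}(G,s,t))$ is a set $F\subseteq E\setminus\{e\}$ containing no $s$--$t$ path of $G$. Since $e\notin F$, such a path can only use arcs of $G\backslash e$, so this is exactly the condition defining $\mathcal{PF}(G\backslash e,s,t)$.

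For (b), $F\subseteq E\setminus\{e\}$ is in the link if and only if $F\cup\{e\}$ contains no $s$--$t$ path in $G$. We want this to be equivalent to $F$ containing no $s$--$t$ path in $G/e$. I would argue via walks rather than paths, since containing a walk from $s$ to $t$ is equivalent to containing a path (shortcut repeated vertices).
\begin{itemize}
\item Suppose $F\cup\{e\}$ contains an $s$--$t$ walk in $G$. Deleting the occurrences of $e$ and projecting gives an $s$--$t$ walk in $G/e$ inside $F$, because $e$ becomes a contracted loop at $s$.
\item Conversely, take a walk in $G/e$ inside $F$. Each time it passes through the merged vertex, it either leaves from the original $s$, or it leaves from the target $w$ of $e$. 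In the second case the walk entered the merged vertex, and it has entered at $s$ or at $w$ originally. If it arrived at $s$, insert $e$ to reach $w$. If it arrived at $w$, nothing is needed. The only bad case, entering at $w$ and needing to leave from $s$, cannot be repaired with $e$ directed $s\to w$. But the walk then revisits the merged vertex, so we shortcut to the last visit. Also, the start at $s$ can go through $e$ first if needed.
\end{itemize}
So shortcutting to a path first, so that the merged vertex is visited once, and then inserting $e$ at the start if the first arc leaves $w$, handles everything. The case $t=w$, where $t$ is identified with $s$, gives an empty path in $G/e$, which matches $\{e\}$ being an $s$--$t$ path. This makes the link the void complex, consistent with the conventions.

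For (\ref{GKL2}), let $e=(s,w)$ be non-useless and the unique arc into $w$, with $|E|>1$. If $w=t$, then any other arc $f$ is useless in $G\backslash e$ since $t$ now has in-degree zero, unless $f$ lies on an $s$--$t$ path. But no arc enters $t$, so every other arc is useless, and one exists because $|E|>1$. If $w\neq t$, then since $e$ lies on an $s$--$t$ path, there is an arc $f$ leaving $w$ on such a path. In $G\backslash e$ the vertex $w$ has no incoming arcs and $w\neq s$ (as $e$ is not a loop, being non-useless), so no $s$--$t$ path passes through $w$. Hence $f$ is useless.

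The main obstacle is the bookkeeping in (b) about how $t$ and the merged vertex interact under contraction. I would handle it by first reducing to paths visiting the merged vertex at most once.
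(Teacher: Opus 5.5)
Your proof is correct. The paper gives no argument for this lemma; it quotes it from \cite{GKL} (Lemmas~3.6 and~3.10). So your verification from the definitions is a self-contained replacement rather than a competing route. Its main benefit is that it states the convention for $G/e$ that the paper uses without comment: the merged vertex is named $s$, so $t$ becomes $s$ when $t$ is the target of $e$. The proof of Theorem~\ref{path_free_and_missing} relies on exactly this when it declares $\bar e$ to be $G/e$-useless.

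A few tidying remarks:
\begin{itemize}
\item In (b), the bullet about repairing a walk at each visit to the merged vertex is unnecessary. The clean argument is the one you end with. A path in $G/e$ starting at the merged vertex never returns to it, so in $G$ all its later vertices lie outside $\{s,w\}$. Prepending $e$ when the first arc leaves $w$ then gives a genuine path of $G$, because $s\neq w$.
\item In (2), the case $w=t$ silently uses $s\neq t$; otherwise the trivial path would exist in $G\backslash e$. This holds because a non-useless arc exists only when $s\neq t$: for $s=t$ the only $s$--$t$ path is the trivial one.
\item A loop $e$ at $s$ in (b) is covered by your convention. In that case $G/e=G\backslash e$, and loops never lie on paths.
\end{itemize}
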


The following theorem summarizes the main results in \cite{GKL}, with the difference that the statements in \cite{GKL} concern homotopy classes rather than simple-homotopy classes. The fact that these complexes are strong combinatorial grapes is proved in \cite[Proposition~3.9]{GKL}; we reprove it here since the proof naturally goes along with the study of the dimension of the associated cross-polytope. In \cite{GKL}, the dimension of the associated sphere is obtained via discrete Morse theory rather than via grape analysis, as remarked in \cite[Remark~9.4]{GKL}. With Theorem~\ref{path_free_and_missing}, we also settle this problem. Furthermore, thanks to Theorem~\ref{grappolo-chiuso} and Corollary~\ref{AlexDual}, one half of the proof of Theorem~\ref{path_free_and_missing} follows from the other half.

\begin{thm}
\label{path_free_and_missing}
Let $G = (V,E)$ be a directed multigraph with $|E|>0$, and $s,t\in V$.  Then the following statements hold.
\begin{enumerate}
\item 
\label{teo1}
The path-free complex  $\mathcal{PF}(G,s,t)$ is a strong combinatorial grape which is: 
\begin{enumerate}
\item[(a)] simple-homotopy equivalent to the void complex, if $G$ contains a useless arc or a cycle;
 \item[(b)] simple-homotopy equivalent to $\partial \beta_{(|V'_G| -1)}$, otherwise.
 \end{enumerate}
\item
\label{teo2}
The path-missing complex $\mathcal{PM}(G,s,t)$  is a strong combinatorial grape which is: 
\begin{enumerate}
\item[(a)] simple-homotopy equivalent to the void complex, if $G$ contains a useless arc or a cycle;
\item[(b)] simple-homotopy equivalent to $\partial \beta_{(|E| - |V'_G| )}$, otherwise.
\end{enumerate}
\end{enumerate}
\end{thm}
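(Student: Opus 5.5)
We prove part~(\ref{teo1}) directly and derive part~(\ref{teo2}) from it. First we check that $\mathcal{PM}(G,s,t)=\mathcal{PF}(G,s,t)^*$ on the ground set $E$: $F\in\mathcal{PF}^*$ iff $E\setminus F\notin\mathcal{PF}$ iff $E\setminus F$ contains a path from $s$ to $t$. Then Theorem~\ref{grappolo-chiuso} makes $\mathcal{PM}$ a strong combinatorial grape. Corollary~\ref{AlexDual} turns case (a) into case (a), and turns $\partial\beta_{(|V'_G|-1)}$ into $\partial\beta_{(|E|-(|V'_G|-1)-1)}=\partial\beta_{(|E|-|V'_G|)}$, which is exactly~(\ref{teo2}).

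For part~(\ref{teo1}) we use induction on $|E|$, in the spirit of the proof of Theorem~\ref{strongcombgrapesim}: at each step we find an arc $e$ such that one of $\operatorname{lk}_e$, $\operatorname{dl}_e$ is a cone, and we use Lemma~\ref{GKLresults} to identify both pieces as path-free complexes of smaller graphs. The key observation is that a useless arc $u$ is a cone apex: adding $u$ to a face $F$ never creates an $s$--$t$ path, since any such path in $F\cup\{u\}$ would use $u$ or already lie in $F$. So if $G$ has a useless arc, $\mathcal{PF}(G,s,t)$ is a cone, hence a strong combinatorial grape by Lemma~\ref{cone}, and it is collapsible, i.e.\ simple-homotopy equivalent to the void complex.

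Now assume there are no useless arcs. We handle the degenerate cases separately. If $s=t$, the empty set contains the trivial path, so the complex is void. If no arc leaves $s$, then with $|E|>0$ every arc is useless. Otherwise pick an arc $e$ with source $s$; it is non-useless, hence not a loop, and let its target be $w$.
\begin{itemize}
\item If another arc $e'$ also has target $w$, we show that $\operatorname{lk}_e=\mathcal{PF}(G/e,s,t)$ is a cone: contracting $e$ merges $w$ into $s$, so $e'$ now points into $s$ and no simple $s$--$t$ path uses it, making $e'$ useless in $G/e$. Then Proposition~\ref{simple-hom}(\ref{simple-hom2}) gives $\Delta\searrow \mathcal{PF}(G\backslash e,s,t)$.
\item Otherwise $e$ is the only arc into $w$, and Lemma~\ref{GKLresults}(\ref{GKL2}) makes $\operatorname{dl}_e=\mathcal{PF}(G\backslash e,s,t)$ a cone (if $|E|>1$). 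Then Proposition~\ref{simple-hom}(\ref{simple-hom1}) gives $\Delta\simeq\operatorname{susp}\,\mathcal{PF}(G/e,s,t)$.
\end{itemize}
In both cases the two pieces are strong combinatorial grapes by induction, so $\Delta$ is one.

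The main obstacle is the bookkeeping of the invariants ``has a useless arc or a cycle'' and $|V'_G|$ under deletion and contraction; we want the sphere dimension to go up by exactly one in the suspension case and stay fixed in the collapse case.
\begin{itemize}
\item In the contraction case (unique arc into $w$, $w\ne s$), $G/e$ loses one nonsink provided $s$ and $w$ are both nonsinks. If $w=t$ is a sink, we will have to treat this directly, expecting $t$ to be reached only via $e$, so that $G$ is small.
\item An acyclic $G$ with no useless arcs should give an acyclic $G/e$ with no useless arcs.
\item A cycle in $G$ should survive, or produce a useless arc, after the relevant operation.
\end{itemize}
I expect to need a separate preliminary reduction, showing that a cycle in a graph without useless arcs forces a cone at some stage. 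I would try first to show that some arc on the cycle becomes useless after contracting an arc out of $s$, reducing this case to the first bullet above. The base case $|E|=1$ is a direct check: a single arc $s\to t$ gives $\{\emptyset\}=\partial\beta_0$ with $|V'_G|=1$.
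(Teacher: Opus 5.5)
Your skeleton is the paper's own. Duality reduces (\ref{teo2}) to (\ref{teo1}), and useless arcs give cones. For an arc $e$ out of $s$ with target $w$, you split according to whether $e$ is the only arc into $w$ (cone on $\operatorname{dl}_e$, suspension) or not (cone on $\operatorname{lk}_e$, collapse). That settles the grape assertion.

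The gap is in (a) and (b). They depend entirely on transferring the dichotomy ``useless arc or cycle'' versus ``neither'', and the count $|V'_G|$, to the relevant piece, and you leave this as expectations. In the collapse case the relevant piece is $G\backslash e$, which your bullets never address. The nontrivial step there is: if $G$ is acyclic without useless arcs, so is $G\backslash e$. This needs the second arc $\bar e$ into $w$; without it the claim fails, by Lemma~\ref{GKLresults}(\ref{GKL2}). The argument runs as follows:
\begin{itemize}
\item If $u$ were $G\backslash e$-useless, an $s$--$t$ path of $G$ through $u$ would have to start with $e$.
\item An $s$--$t$ path through $\bar e$ avoids $e$, since both arcs enter $w$.
\item Its initial segment up to $w$, followed by the first path after $e$, is an $s$--$t$ walk in $G\backslash e$ through $u$, hence a path by acyclicity.
\end{itemize}
Then $s$ stays a nonsink, so $|V'_{G\backslash e}|=|V'_G|$.

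In the suspension case you still must show three things:
\begin{itemize}
\item $G/e$ is acyclic: no arc of $G/e$ enters the merged vertex, because arcs into $s$ are useless and $e$ is the only arc into $w$.
\item $G/e$ has no useless arcs: project $G$-paths and use acyclicity.
\item $w\neq t$: if $w=t$, every $s$--$t$ path is just $e$, forcing $|E|=1$.
\end{itemize}
Hence $w$ is a nonsink and $|V'_{G/e}|=|V'_G|-1$.

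Your proposed preliminary reduction for cycles is unnecessary, and the strategy you sketch for it is false. A cycle of $G$ cannot pass through $s$, since arcs into $s$ are useless. So it avoids $e$ and survives both in $G\backslash e$ and in $G/e$. Induction hypothesis (a) then makes the relevant piece simple-homotopy equivalent to the void complex. Hence so is $\Delta$, via the collapse or via Lemma~\ref{secollassaanchesospensione}; this is exactly how the paper handles cycles.

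Contracting an arc out of $s$ need not make any arc of a cycle useless. Take arcs $e_1\colon s\to a$, $a\to b$, $a\to c$, $b\to c$, $c\to b$, $b\to t$, $c\to t$. There are no useless arcs, and $e_1$ is the only arc out of $s$ and into $a$. Yet $G/e_1$ still has no useless arcs and contains the cycle $b\to c\to b$.
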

\begin{proof}
By Theorem~\ref{grappolo-chiuso} and Corollary~\ref{AlexDual}, the assertions (\ref{teo1}) and (\ref{teo2}) are equivalent. 

Let us prove only (\ref{teo1}). We use induction on $|E|$. If $|E|=1$, then the assertion is easy to check. 

 If $G$ has a useless arc $e$, then $\mathcal{PF}(G,s,t)$ is a cone with apex $e$, and hence it is a strong combinatorial grape by Lemma~\ref{cone},~(\ref{cone1}) and is simple-homotopy equivalent to the void complex. 
Furthermore, note that each arc is useless when $s=t$ as well as  when $s\neq t$ and there are no arcs with $s$  as source. 

Hence, we may suppose $|E|>1$, $s\neq t$, that no arc of $G$ is useless and that there is at least one arc $e$ whose source is $s$.

By the induction hypothesis, both  $\operatorname{dl}_{e}(\mathcal{PF}(G,s,t))$ and  $\operatorname{lk}_{e}(\mathcal{PF}(G,s,t)) $ are strong combinatorial grapes since $\operatorname{dl}_{e}(\mathcal{PF}(G,s,t)) = \mathcal{PF}(G \backslash e,s,t)$ and $\operatorname{lk}_{e}(\mathcal{PF}(G,s,t)) = \mathcal{PF}(G / e,s,t)$ by Lemma~\ref{GKLresults},~(\ref{GKL1}).

\bigskip
 
Let $s'$ be the target of $e$.

\emph{Argument for when $G$ has no arcs other than $e$ with target $s'$.} In this case, $G \backslash e$ has a useless arc by Lemma~\ref{GKLresults},~(\ref{GKL2}), hence 
$\operatorname{dl}_{e}(\mathcal{PF}(G,s,t))$ is a cone and  $\mathcal{PF}(G,s,t)$ is a strong combinatorial grape which is simple-homotopic to $\operatorname{susp}( \mathcal{PF}(G / e,s,t)) $. By Lemma~\ref{secollassaanchesospensione}, we need to show that the following are equivalent:
\begin{enumerate}
\item $G / e$ has neither useless arcs nor cycles,
\item $G$ has no cycles (we already know that $G$ has no useless arcs),
\end{enumerate}
and that, if the two above conditions are fulfilled, then $V'_{G}=V'_{G/e}+1$. 

It is clear that, if  $G$ has a cycle, then  $G / e$ has a cycle. 

Now suppose that $G/e$ has a cycle. A priori,  such cycle could give rise to either a cycle of $G$, or a path from $s$ to $s'$,  or a path from $s'$ to $s$.  But the case of a path  from $s$ to $s'$ is impossible since $e$ is the only arc of $G$ with target $s'$,  as well as the case of a path from $s'$ to $s$ since an arc whose target is $s$ is clearly useless.
Now suppose that $G/e$ has no cycles and let us show that $G/e$ has no useless arc as well. Toward a contradiction, let $u$ be a $G/e$-useless arc. Then $u$ is contained in a path $p$ of $G$ from $s$ to $t$ since $G$ has no useless arcs. After removing eventually $e$, the path $p$ gives rise to a walk from $s$ to $t$ in $G/e$ containing $u$ which is actually a path since $G / e$ has no cycles. Hence $u$ is not useless also as an arc of $G/e$. Hence, we have shown that the two above conditions are equivalent.

Suppose that the two above conditions are fulfilled. Clearly $s \in V'_{G}$ since $s$ is the source of $e$. If $s'=t$, then no arc in $E\setminus \{e\}$ has source in $\{s,s'\}$ since $G/e$ has no useless arc. Hence $s' \notin (V'_{G} \cup V'_{G/e})$ and $V'_{G}=V'_{G/e} \cup\{s\}$.  If $s' \neq t$, then there must be an arc of $G$ with source $s'$ otherwise $e$ would be $G$-useless. Hence $s' \in V'_{G}$ and $s=s' \in V'_{G/e}$. Hence $|V'_{G}|=|V'_{G/e}|+1$.  

\emph{Argument for when $G$ has an arc $\bar{e}$, with $\bar{e}\neq e$, whose  target is  $s'$.}   Since its target is $s'=s$,  the arc $\bar{e}$ is $G/e$-useless, hence 
$\operatorname{lk}_{e}(\mathcal{PF}(G,s,t))$ is a cone and  $\mathcal{PF}(G,s,t)$ is a strong combinatorial grape which is simple-homotopic to $ \mathcal{PF}(G  \backslash  e,s,t) $. We need to show that the following are equivalent:
\begin{enumerate}
\item $G  \backslash  e$  has neither useless arcs nor cycles,
\item $G$ has no cycles (we already know that $G$ has no useless arcs),
\end{enumerate}
and that, if the two above conditions are fulfilled, then $V'_{G}=V'_{G\backslash e}$. 

If  $G$ has a cycle $c$, then  $c$ cannot contain $s$ since $G$ has no useless arc (and an arc whose target is $s$ is useless). Hence $c$ cannot contain $e$ and gives rise to a cycle of $G  \backslash e$. 

If $G \backslash e$ has a cycle $c$, then $c$ gives rise to a cycle of $G$ as well. 
Now suppose that $G \backslash e$ has no cycles and let us show that $G \backslash e$ has no useless arc as well. Toward a contradiction, let $u$ be a $G \backslash e$-useless arc.  Since $u$ is not $G$-useless, $G$ has path $p$ from $s$ to $t$ containing $u$. Since $u$ is $G \backslash e$-useless, $p$ contains also $e$ (as first arc). Since $\bar{e}$ is not $G$-useless, $G$ has  a path $p'$ from $s$ to $t$ containing $\bar{e}$ (and clearly not containing $e$). The first part of $p'$ up to $\bar{e}$ together with $p$ lacking $e$ provide a walk of $G \backslash e$ from $s$ to $t$ containing $u$ and not containing $e$. Since $u$ is  $G \backslash e$-useless, this path contains a cycle, which is a contradiction.

Suppose that the two above conditions are fulfilled. Clearly $s \in V'_{G}$ since $s$ is the source of $e$. But also $s\in V'_{G\backslash e}$ holds  since otherwise there would be no paths from $s$ to $t$ in $G\backslash e$, and hence $G\backslash e$ would have useless arcs. 
Hence, $|V'_{G}|=|V'_{G/e}|$.  
\end{proof}

\begin{rmk}
Some of the technicalities in the proof of Theorem~\ref{path_free_and_missing}, in particular the use of Lemma~\ref{GKLresults} and the splitting of the proof according to whether $G$ has an extra arc with target $s'$ or not, are taken from \cite{GKL}. Here we give a slightly stronger statement and simplify the proof. Moreover, the treatment is carried out entirely within the theory of grapes.
\end{rmk}

Figure~\ref{nouseless} provides an example of a graph $G$ with distinguished vertices $s$ and $t$ having no useless arcs but containing a cycle. Its associated path-free and path-missing complexes are simple-homotopy equivalent to the void complex (in fact, collapsible), but they are not cones. In the figure, we show the path-free complex $\mathcal{PF}(G,s,t)$, which consists of two tetrahedra $\{A,E,C,D\}$ and $\{F,B,C,D\}$ sharing the face $\{C,D\}$, together with the additional faces $\{A,F\}$, $\{A,F,D\}$, $\{B,E\}$, and $\{B,E,C\}$.
\begin{figure}[h!]

\centering

\begin{minipage}{0.48\textwidth}
\centering
% --------- LEFT FIGURE ----------
\begin{tikzpicture}[scale=2.4]

\tikzset{
  vertex/.style={circle,draw,inner sep=1pt,minimum size=2.5mm},
  edge/.style={->,draw=black,thick}
}

\node[vertex] (u) at (-0.7, 1) {};
\node[vertex] (v) at ( 0.7, 1) {};
\node[vertex,label=below:$t$] (t) at ( 0,   0) {};
\node[vertex,label=above:$s$] (s) at ( 0,   2) {};

\path[edge]
  (s) edge node[left] {$A$} (u) %(s) edge[bend left=10] (u)
  (s) edge node[right] {$E$}(v)
  (u) edge[bend left=20] node[above] {$C$}  (v)
  (v) edge[bend left=20] node[below] {$D$}  (u)
  (u) edge node[left] {$B$} (t)
  (v) edge node[right] {$F$} (t);

\end{tikzpicture}

\subcaption*{The graph $G$}
\end{minipage}
\hfill
\begin{minipage}{0.48\textwidth}
\centering
% --------- RIGHT FIGURE ----------

\begin{tikzpicture}[scale=1.2]

\tikzset{
  vertex/.style={circle,draw,inner sep=1pt,minimum size=2.5mm},
  edge/.style={thick},
  face/.style={fill=gray!20,draw=black,thick,opacity=0.7},
  hidden/.style={thick,dashed}
}

\node[vertex,label=above:$D$] (D) at (0, 1.5) {};
\node[vertex,label=below:$C$] (C) at (0,0.5) {};

\node[vertex,label=above left:$A$] (A) at (-2, 3) {};
\node[vertex,label=below left:$E$] (E) at (-2,-1) {};

\node[vertex,label=above right:$F$] (F) at ( 2, 3) {};
\node[vertex,label=below right:$B$] (B) at ( 2,-1) {};

\draw[edge] (D)--(C);
\draw[edge] (D)--(A);
\draw[edge] (A)--(E);
\draw[edge] (E)--(C);

\draw[edge] (A)--(C);
\draw[hidden] (D)--(E);

\draw[edge] (D)--(F);
\draw[edge] (F)--(B);
\draw[edge] (B)--(C);

\draw[edge] (F)--(C);
\draw[hidden] (D)--(B);

\draw[edge] (A)--(F);
\draw[edge] (E)--(B);
\filldraw[face] (A.center)--(F.center)--(D.center)--cycle;

\filldraw[face] (E.center)--(B.center)--(C.center)--cycle;

\filldraw[fill=gray!40,draw=black,thick,opacity=0.6] (A.center)--(E.center)--(C.center)--(D.center)--cycle;
\filldraw[fill=gray!40,draw=black,thick,opacity=0.6] (F.center)--(B.center)--(C.center)--(D.center)--cycle;

\end{tikzpicture}

\subcaption*{$\mathcal{PF}(G,s,t)$}
\end{minipage}

\caption{The graph $G$ and $\mathcal{PF}(G,s,t)$.}
\label{nouseless}
\end{figure}
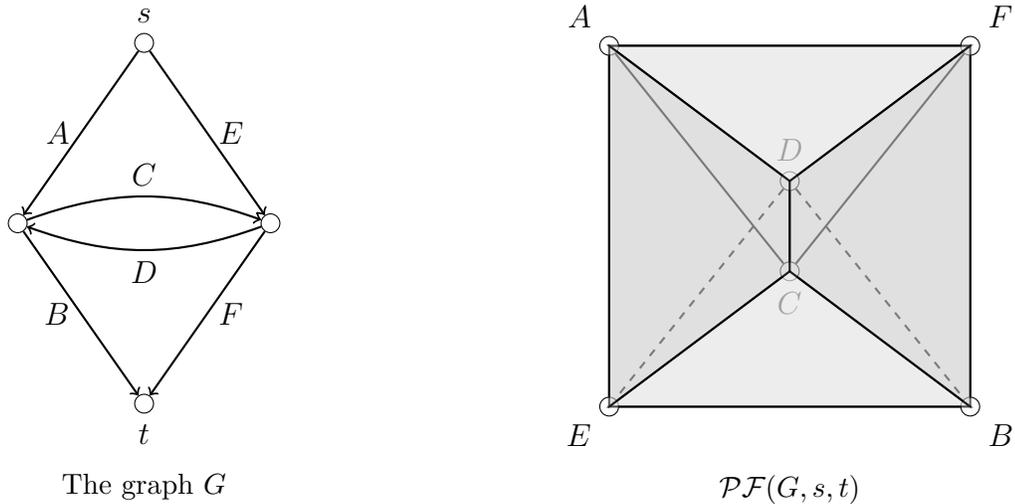

\bigskip
{\bf Acknowledgments:} 
The author is a member of the INDAM group GNSAGA.
% This research did not receive any specific grant from funding agencies in the public, commercial, or not-for-profit sectors.

\end{document}